\documentclass[12pt]{amsart}
\usepackage{amsmath,amsthm,amssymb,amsfonts}
\usepackage{latexsym}
\usepackage{enumerate}
\usepackage{verbatim}
\usepackage{multicol}
\usepackage{graphicx}
\usepackage{epsfig}
\usepackage{color}
\usepackage{bezier}
\usepackage{curves}
\usepackage{fullpage}
\usepackage{MnSymbol}
\usepackage{trfsigns}
\usepackage{cases}


\newtheorem{theorem}{Theorem}

\newtheorem{problem}{Problem}
\newtheorem{remark}{Remark}
\newtheorem{lemma}[theorem]{Lemma}

\newtheorem{corollary}[theorem]{Corollary}

\newtheorem{definition}{Definition}

\newcommand{\NN}{{\mathbb N}}
\newcommand{\ZZ}{{\mathbb Z}}

\DeclareMathOperator{\ainc}{Inc}

\DeclareMathOperator{\comp}{{Comp}}

\DeclareMathOperator{\prim}{Prim}

\DeclareMathOperator{\age}{Age}
\DeclareMathOperator{\inc}{Inc}

\title[Minimal prime ages, words and permutation graphs]{Minimal prime ages, words and permutation graphs\\ Extended abstract}

\author[D.Oudrar] {Djamila Oudrar}
\address{Faculty of Mathematics, USTHB, Algiers, Algeria}
\email {dabchiche@usthb.dz}

\author [M.Pouzet]{Maurice Pouzet}
\address{Univ. Lyon, Universit\'e Claude-Bernard Lyon1, CNRS UMR 5208, Institut Camille Jordan, 43, Bd. du 11 Novembre 1918, 69622
Villeurbanne, France et Department of Mathematics and Statistics, University of Calgary, Calgary, Alberta, Canada}
\email{pouzet@univ-lyon1.fr }

\author[I.Zaguia]{Imed Zaguia*}\thanks{*Corresponding author. Supported by Canadian Defence Academy Research Program, NSERC and LABEX MILYON (ANR-10-LABX-0070) of Universit\'e de Lyon within the program ''Investissements d'Avenir (ANR-11-IDEX-0007'' operated by the French National Research Agency (ANR)}
\address{Department of Mathematics \& Computer Science, Royal Military College of Canada,
P.O.Box 17000, Station Forces, Kingston, Ontario, Canada K7K 7B4}
\email{zaguia@rmc.ca}


\date{\today}

\begin{document}
\subjclass[2000] {05C30, 06F99, 05A05, 03C13.}

\keywords {ordered set; relational structure; indecomposability; graph; permutation; permutation graph; age; hereditary class; well-quasi-order.}

\begin{abstract} This paper is a contribution to the study of hereditary classes of finite graphs.
We  classify these classes according to the number of prime structures they contain.
We consider such classes that are \emph{minimal prime}: classes that contain infinitely many primes but every proper hereditary subclass contains only   finitely many primes. We give a complete characterization of such classes. In fact, each one of these classes is a well quasi ordered age and there are uncountably many of them. Eleven  of these ages remain well quasi ordered when labels in a well quasi ordering are added. Among the remaining ones, countably many remain well quasi ordered when one  label is added.

Except for six examples, members of these ages we characterize are permutation graphs. In fact, every age which is not among the eleven ones is the age of a graph associated to a uniformly recurrent $0$-$1$ word  on  the integers.

A characterization of minimal prime classes of posets and bichains is also provided.
\end{abstract}

\maketitle

\section{Introduction}
This paper is a contribution to the study of hereditary classes of finite graphs. We  classify hereditary classes according to the number of prime structures they contain. We consider first hereditary classes of graphs that contain only finitely many prime members. Then, we consider those hereditary classes which contain  infinitely many prime members, and we show that there are  minimal ones with respect to set inclusion. We obtain some general results that we are able to refine in some special cases like  graphs, ordered sets,  and bichains.

This paper is mostly about  graphs and posets. We will also  consider binary relational structures, that is ordered pairs $\mathcal R:=(V,(\rho_{i})_{i\in I})$ where each $\rho_i$ is a binary relation or a unary relation on $V$. The sequence $s:= (n_i)_{i\in I}$ of arity  $n_i$ of $\rho_i$ is the \emph{signature} of $\mathcal R$. We denote by $\Omega_s$ the collection of finite structures of signature $s$. In the sequel we will suppose the signature finite, i.e. $I$ finite. We present the main notions in terms of graphs.
Unless otherwise stated, the graphs we consider are undirected, simple and have no loops. That is, a {\it graph} is a
pair $G:=(V, E)$, where $E$ is a subset of $[V]^2$, the set of $2$-element subsets of $V$. Elements of $V$ are the {\it vertices} of
$G$ and elements of $ E$ its {\it edges}. The {\it complement} of $G$ is the graph $\overline{G}$ whose vertex set is $V$ and edge set
${\overline { E}}:=[V]^2\setminus  E$. If $A$ is a subset of $V$, the pair $G_{\restriction A}:=(A,  E\cap [A]^2)$ is the \emph{graph
induced by $G$ on $A$}. We compare graphs with the embeddability relation. A graph $G$ is \emph{embeddable} in a graph $G'$ and we set $G\leq G'$,  if $G$  is isomorphic to an induced subgraph of $G'$. This defines a quasi order. We recall that  a class $\mathcal C$ of graphs is \emph{hereditary}   if it contains every graph  $G$ which embeds in some member of  $\mathcal C$. Such a class  is an initial segment of the class of graphs quasi ordered by embeddability.
If $G$ is a graph, then the \emph{age} of $G$ is the collection  $\age(G)$ of finite graphs $H$, considered up to isomorphy, which embed in  $G$.  A characterization of ages (also valid for classes of relational structures with a finite signature) was given by Fra\"{\i}ss\'e (see chapter 10 of  \cite{fraissetr}). Namely, a class $\mathcal C$ of finite graphs is the age of some  graph if and only if $\mathcal C$  is an \emph{ideal} of the class of finite graphs, that is  a  nonempty,  hereditary  and  \emph{up-directed} class (a class in which any pair of members of $\mathcal C$ are embeddable in some element of $\mathcal C$). We   recall that an ordered $P:= (V, \leq)$  is \emph{well quasi ordered}  (w.q.o) if every sequence $x_0, \ldots, x_n, \ldots$ contains an increasing subsequence $x_{n_0}\leq  \ldots, \leq x_{n_k}, \ldots$ with respect to embeddability. If the class $P$ does  not contain infinite descending chains, this amounts to the nonexistence of  infinite antichains. A
 class $\mathcal C$ of graph or more generally of relational structures  is \emph{hereditary well-quasi-ordered} if the class of members of $\mathcal C$ labelled by any  w.q.o. is w.qo. with respect to the embeddability quasi order. A \emph{bound} of a hereditary class $\mathcal C$ of finite structures is any minimal structure not  in $\mathcal C$. We recall that a hereditary class $\mathcal C$ of finite structures which is hereditary w.q.o. has finitely many bounds \cite{pouzet72}.

\section{Minimal prime hereditary classes}\label{section:min-hered-class}
%
We start  with the  notion of a module.

\begin{definition}\label{def:module}
Let  $\mathcal{R}:=(V,(\rho_i)_{i\in I})$ be a binary relational structure. A  \emph{module} of $\mathcal{R}$ is any subset $A$ of $V$ such that  $$(x\rho_i a \Leftrightarrow x\rho_i a')  \; \text{and} \; (a\rho_i x \Leftrightarrow a'\rho_i x) \; \text{for all} \; a,a'\in A \;\text{and}\;
x\notin A \; \text{and} \; i\in I.$$
\end{definition}

The empty set, the singletons in $V$ and the whole set $V$ are modules and are called \textit{trivial}. (sometimes in the literature, modules  are called \emph{interval},
 \emph{autonomous} or \emph{partitive sets}). If $\mathcal{R}$ has no nontrivial module, it is called \emph{prime} or \textit{indecomposable}.

\noindent For example, if $\mathcal{R}:=(V,\leq)$ is a chain, its modules  are the ordinary intervals of the chain. If $\mathcal{R}:=(V,\leq,\leq')$ is a bichain then $A$ is a  module of $\mathcal{R}$ if and only if $A$ is an  interval of $(V,\leq)$ and $(V,\leq')$.

The notion of module  goes back to Fra\"{\i}ss\'e  \cite{fraisse3} and Gallai  \cite{gallai}, see also \cite{fraisse84}. A fundamental decomposition result of a binary structure into modules was obtained by Gallai \cite{gallai} for finite binary relations (see \cite{ehren} for further extensions).
We recall the  compactness result of Ille \cite{ille}.

\begin{theorem}\label{ille-theorem}
A binary structure $\mathcal {R}$  is prime if and only if every finite subset $F$ of its domain extend to a finite sets $F'$ such that  $\mathcal {R}_{\restriction F'}$ is prime.
\end{theorem}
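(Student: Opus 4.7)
The plan is to treat the two implications of the equivalence separately; one is essentially immediate and the other carries all the content.

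For the reverse direction ($\Leftarrow$) I argue the contrapositive. Suppose $\mathcal{R}$ has a nontrivial module $M$, so $|M|\geq 2$ and there exists $v\in V\setminus M$. Pick any two distinct elements $a,a'\in M$ and set $F:=\{a,a',v\}$. The defining condition of a module is preserved under restriction, so for every finite $F'\supseteq F$ the set $M':=M\cap F'$ is a module of $\mathcal{R}_{\restriction F'}$; moreover $\{a,a'\}\subseteq M'\subseteq F'\setminus\{v\}$, so $M'$ is nontrivial and $\mathcal{R}_{\restriction F'}$ is not prime. Hence the local-primeness hypothesis forces $\mathcal{R}$ to be prime.

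For the forward direction ($\Rightarrow$), given a finite $F\subseteq V$, the strategy is iterative. If $\mathcal{R}_{\restriction F}$ is already prime we are done; otherwise list its (finitely many) nontrivial modules $M_1,\ldots,M_k$. Because $\mathcal{R}$ itself is prime, none of the $M_j$ is a module of $\mathcal{R}$, so for each $j$ there exist $a_j,a'_j\in M_j$ and a witness $x_j\in V\setminus M_j$ violating the module condition. Adjoining $x_1,\ldots,x_k$ to $F$ yields a strictly larger $F_1\subseteq V$ in which every $M_j$ has ceased to be a module.

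The delicate point — and the main obstacle — is that new nontrivial modules can appear in $\mathcal{R}_{\restriction F_1}$, so naive iteration need not terminate. The cleanest way to enforce termination is to establish a Schmerl--Trotter style two-element extension lemma: if $H$ is an induced finite prime substructure of $\mathcal{R}$ on at least two vertices and $y\in V\setminus\mathrm{dom}(H)$, then there exist at most two elements $y_1,y_2\in V$ (with $y\in\{y_1,y_2\}$) such that $\mathcal{R}_{\restriction \mathrm{dom}(H)\cup\{y_1,y_2\}}$ is again prime. Starting from a trivially prime two-element subset of $F$ and iterating this lemma, adding one element of $F$ at a time and at most one extra witness to repair primeness, produces after finitely many steps a finite prime $F'\supseteq F$. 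Proving the two-element lemma is a finite combinatorial analysis of how a single new vertex can introduce modules into a prime substructure and how one further well-chosen vertex from $V$ breaks every such potential module; it uses primeness of $\mathcal{R}$ exactly to supply those breaking vertices. An alternative route is a compactness argument: identifying subsets of $V$ with points of the compact space $\{0,1\}^V$, one would extract from a putative sequence of nontrivial modules of ever larger finite restrictions a cluster point that is a module of $\mathcal{R}$, the delicate part being to ensure that nontriviality (both $|M|\geq 2$ and $M\neq V$) survives the limit.
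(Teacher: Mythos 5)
Your backward direction is complete and correct: restriction of a module to an induced substructure is again a module, and your choice $F=\{a,a',v\}$ pins down nontriviality in every finite extension. Note, though, that the paper does not prove this theorem at all --- it cites it as Ille's compactness result \cite{ille} --- so the benchmark is Ille's own argument, and your forward direction is a reconstruction of exactly his strategy (grow a prime restriction by admitting one prescribed vertex at the cost of at most one auxiliary vertex). But as a proof it has a genuine gap: the entire content of the hard direction is packed into your ``Schmerl--Trotter style two-element extension lemma,'' which you assert is ``a finite combinatorial analysis'' and never carry out. Since the theorem follows in a few lines from that lemma, deferring it means the proposal proves only the easy half.

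Worse, the lemma is \emph{false} as you state it, with base ``$H$ prime on at least two vertices.'' Take any prime graph $G$ containing a triangle $\{a,b,y\}$ (such prime graphs exist, e.g.\ among the graphs of Theorem \ref{thm:chudnovsky}(6) or the infinite examples in the paper's figures), let $H$ be the two-vertex restriction to $\{a,b\}$ (vacuously prime under the paper's definition) and let $y$ be the third vertex of the triangle. Your lemma demands $F'$ with $\{a,b,y\}\subseteq F'$, $|F'|\leq 4$, and $\mathcal R_{\restriction F'}$ prime. But no $3$-vertex graph is prime ($P_3$ has its two endpoints as a module; $K_3$, $\overline{K_3}$ and $K_2\cup K_1$ likewise have nontrivial modules), and the unique prime graph on $4$ vertices is $P_4$, which is triangle-free --- so no such $F'$ exists and your induction fails at its very first step, ``starting from a trivially prime two-element subset of $F$.'' The correct version of the extension lemma (Ille's) requires the prime restriction to have at least $3$ elements (hence at least $4$ for graphs), and one then needs a separate, nontrivial argument that any set of at most three vertices of a prime structure lies in a prime restriction of bounded size; both steps constitute the actual substance of Ille's paper. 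Your fallback compactness sketch does not rescue this: as you yourself flag, an ultrafilter or cluster-point limit of nontrivial modules $M_{F}$ of larger and larger restrictions can perfectly well be empty, a singleton, or the whole domain (the $M_F$ may, for instance, be pairwise disjoint pairs), and there is no evident normalization forcing two common elements into all $M_F$ --- which is precisely why the known proof is combinatorial rather than a soft compactness argument.
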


We consider the class $\prim_{s}:=\prim (\Omega_s)$ of finite  binary structures of signature $s$ which are prime. We set $\prim (\mathcal C):= \prim_s\cap \, \mathcal C$ for every $\mathcal C\subseteq \Omega_s$.

We say that a subclass $\mathcal{D}$ of $\prim_{s}$ is \emph{hereditary} if it contains every member of $\prim_{s}$  which can be embedded into some member of $\mathcal{D}$.

\subsection{Hereditary classes containing finitely many prime structures}

The following result (see Proposition 5.2  of \cite{oudrar-pouzet2016}) improves  a result of  \cite{albert-atkinson} for  hereditary classes of finite permutations.

\begin{theorem}\label{thm:finite-prime}Let $\mathcal {C}$ be a hereditary class of finite binary structures containing only finitely many prime structures. Then $\mathcal {C}$ is hereditarily w.q.o. In particular, $\mathcal {C}$ has finitely many bounds.
\end{theorem}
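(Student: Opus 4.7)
The plan is to prove hereditary w.q.o.\ via the Gallai modular decomposition. Every finite binary structure $\mathcal{R}$ of signature $s$ admits a canonical decomposition tree $T(\mathcal{R})$ whose leaves are the elements of $\mathcal{R}$ and whose internal nodes are labeled by the quotient of the substructure induced on the descendants by its maximal proper strong modules. By Gallai's theorem, each such quotient is either prime or \emph{degenerate} (in the graph case: a complete or empty graph; more generally a constant or linear-type structure whose modules are respectively all subsets or all intervals). Since $\prim(\mathcal{C})$ is finite, the prime quotients that appear in $T(\mathcal{R})$ for $\mathcal{R}\in\mathcal{C}$ are drawn from a fixed finite set and each has bounded arity; the degenerate quotient-types form a fixed finite collection determined by the signature $s$ alone.

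Given a w.q.o.\ $Q$, let $\mathcal{C}_{Q}$ denote the class of members of $\mathcal{C}$ with vertices labeled by $Q$. Each $\mathcal{R}\in\mathcal{C}_{Q}$ corresponds to a rooted tree $T(\mathcal{R})$ whose internal nodes are labeled from the finite set of quotient-types described above and whose leaves are labeled from $Q$; at a prime node of type $P$ the children occupy the fixed finite set of positions indexed by the vertices of $P$, and at a degenerate node the children form a multiset (or a chain, for linearly-ordered degenerate types). Kruskal's tree theorem, applied with a mix of ordered children at prime nodes and unordered/chain children at degenerate nodes and with leaf labels in $Q$, then yields that this class of labeled trees is w.q.o.\ under the natural label-preserving tree embedding.

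It remains to translate a tree embedding $T(\mathcal{R}_{1})\hookrightarrow T(\mathcal{R}_{2})$ into a structural embedding $\mathcal{R}_{1}\hookrightarrow \mathcal{R}_{2}$; this is the main obstacle. It is handled by structural induction on the decomposition tree, using that at a prime node indecomposability forces the embedding to match every child position, and that at a degenerate node the uniform relations between children allow any selection of subtree embeddings to assemble into a global embedding. The delicate point is that decomposition trees need not have bounded height, so the induction must be on the tree itself rather than on a numerical parameter, and the degenerate nodes of unbounded arity must be dealt with by an internal Higman argument. Once this translation is established, $\mathcal{C}_{Q}$ is w.q.o.\ for every w.q.o.\ $Q$, so $\mathcal{C}$ is hereditarily w.q.o. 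The final assertion that $\mathcal{C}$ has finitely many bounds is then immediate from the Pouzet result \cite{pouzet72} recalled in the introduction.
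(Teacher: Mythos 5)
Your plan is sound and is, in all essentials, the standard argument behind this theorem; note first, though, that the paper itself contains no proof of it: this is an extended abstract, and the statement is quoted from Proposition 5.2 of \cite{oudrar-pouzet2016} (improving \cite{albert-atkinson} for permutation classes). The proof there rests on the same two pillars as yours --- the Gallai/2-structure modular decomposition \cite{gallai, ehren} and a well-quasi-ordering theorem of Higman --- but it is packaged so that what you call the ``main obstacle'' never arises. Instead of Kruskal's tree theorem one invokes Higman's theorem on ordering by divisibility in abstract algebras \cite{higman}: for each of the finitely many members $P$ of $\prim(\mathcal{C})$ one takes a $|P|$-ary lexicographic-sum operation, and the degenerate (constant and linear) quotients of unbounded arity are generated by finitely many \emph{binary} associative sums, so all operations have finite arity. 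Higman's divisibility order is by construction compatible with these operations, each of which is monotone and extensive with respect to embeddability, so the order on terms directly implies embeddability of the corresponding structures --- no translation of a tree embedding into a structural embedding is needed, and your ``internal Higman argument'' at unbounded degenerate nodes is replaced by associativity. Your Kruskal route does go through, but with one correction: at a prime node it is not indecomposability that forces child positions to match. It is your own stipulation that children of prime nodes are linearly ordered, together with label preservation and meet preservation, that does it: distinct children of a node $v$ must map into subtrees of distinct children of $\phi(v)$ (else meets would not be preserved), and an order-preserving injection between two child sequences of equal length $k$ under equal labels is the identity; this, not primality of the quotient, is what makes the relation between two leaves readable off the label and child positions of their meet. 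Indecomposability has already done all its work earlier, in guaranteeing that every prime quotient lies in the finite set $\prim(\mathcal{C})$ (each quotient embeds via a choice of representatives and $\mathcal{C}$ is hereditary) --- a point you use but should state. Finally, since the signature may contain unary relations, absorb them into the leaf labels by replacing $Q$ with its product with the finite set of unary types. Your concluding step, deducing finitely many bounds from hereditary w.q.o., is exactly the paper's appeal to \cite{pouzet72}.
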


The following result, due independently to C. Delhomm\'e \cite{delhomme1} and McKay
 \cite{mckay1} extends Thomass\'e's result on the w.q.o.  character of series-parallel posets \cite{thomasse},  which extends the  famous Laver's theorem \cite{laver} on the  w.q.o.  character of the class of countable chains.
\begin{theorem}
Let $\mathcal C$ be a hereditary classes of $\Omega_{s}$. If $\prim (\mathcal C)$ is finite, then  the collection  of countable $R$ such that $Age(R)\subseteq \mathcal C$ is well-quasi-ordered  by embeddability.
\end{theorem}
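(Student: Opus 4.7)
\smallskip

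\noindent\textbf{Proof plan.} The strategy is to encode each countable $R$ with $\age(R)\subseteq \mathcal C$ as a labelled tree drawn from a restricted class, then apply a Laver--Nash-Williams type theorem to show that this class is better-quasi-ordered (hence wqo) under a tree-embedding relation that implies embeddability of the original structures.

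The first step is modular decomposition. Using Gallai's theorem~\cite{gallai} and its extension to infinite binary structures~\cite{ehren}, one attaches to $R$ a rooted tree $T(R)$ whose internal nodes correspond to the strong modules of $R$; each internal node $v$ carries a \emph{quotient} that is either a \emph{prime} binary structure or a \emph{constant} one (complete or independent, with children indexed by a linear order). Ille's compactness theorem (Theorem~\ref{ille-theorem}) forces every prime quotient appearing in $T(R)$ to embed into a finite prime substructure of $R$; since $\age(R)\subseteq\mathcal C$, that quotient lies in the finite set $\prim(\mathcal C)$. Internal nodes of $T(R)$ can therefore be labelled from a finite alphabet $A:=\prim(\mathcal C)\cup\{c\}$, where $c$ marks constant nodes and the children of such a node are indexed by a countable chain, while the children of a prime node are in bijection with the vertex set of its finite quotient.

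The second step is to establish a tree/structure correspondence: a label-preserving, linear-order-respecting embedding $T(R)\hookrightarrow T(R')$ induces an embedding $R\hookrightarrow R'$, and conversely every structure embedding refines to a tree embedding. This reduces wqo of the family of countable $R$'s to wqo of the family of labelled trees $T(R)$ under tree-embedding.

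The third step invokes the bqo machinery. Laver's theorem~\cite{laver} asserts that countable chains form a bqo under embeddability; its generalisation by Nash-Williams to trees labelled in a bqo (used by Thomass\'e~\cite{thomasse} in the series-parallel case, where $\prim(\mathcal C)=\emptyset$) then yields bqo --- in particular wqo --- of $\{T(R)\}$ under tree-embedding, since the label set $A$ is finite. The main obstacle is the correspondence of the second step: one must verify that a tree-embedding can be realized \emph{coherently} as an embedding of structures, combining choices on subtrees with the rigidity imposed by the finitely many possible prime quotients. This is the combinatorial heart of the Delhomm\'e/McKay argument, where Ille's compactness, Laver's theorem on countable chains, and the finiteness of $\prim(\mathcal C)$ come together.
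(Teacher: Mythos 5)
You are reconstructing a proof that the paper itself does not contain: this theorem is quoted from Delhomm\'e \cite{delhomme1} and McKay \cite{mckay1} without proof, so there is no internal argument to compare against. Your plan does match what one expects those sources to do --- modular decomposition into a labelled tree, finiteness of the label alphabet via $\prim(\mathcal C)$, then bqo machinery in the tradition of Laver \cite{laver} and Thomass\'e \cite{thomasse} --- and your use of Theorem~\ref{ille-theorem} is essentially right: a prime quotient is isomorphic to the substructure induced on a transversal of the modular partition, so its finite prime restrictions lie in $\age(R)\cap\prim_s\subseteq\prim(\mathcal C)$, and Ille's compactness then bounds its size, forcing it to lie in the finite set $\prim(\mathcal C)$.

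As a proof, however, the proposal has a genuine gap, and it is exactly the step you defer as ``the combinatorial heart.'' Two concrete problems. First, the half of your Step 2 asserting that \emph{every} structure embedding refines to a tree embedding is false in general: an embedding $R\hookrightarrow R'$ need not carry strong modules of $R$ into strong modules of $R'$. Fortunately this direction is also unnecessary --- for wqo you only need that $T(R)\le T(R')$ implies $R\le R'$ --- but as written your reduction is stated with an incorrect (and unproved) equivalence. Second, for \emph{countable} structures the decomposition tree is not the straightforward object your sketch assumes: there can be strictly decreasing $\omega$-sequences of strong modules with empty intersection, so the tree of strong modules is not well founded and one must introduce limit nodes (this is the content of Courcelle and Delhomm\'e's work on modular decomposition of countable graphs, and it is where the infinite case genuinely differs from Gallai's finite theorem \cite{gallai}, \cite{ehren}). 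Moreover, the trees you obtain have infinite branches, degenerate nodes whose children are indexed by arbitrary countable chains, and prime nodes whose children are indexed by the vertices of the quotient in a way that embeddings must respect; plain Nash-Williams on trees with bqo labels does not cover this structured, non-well-founded setting. One needs Laver's bqo theorem for structured trees (or Delhomm\'e's ``grounded category'' framework), and the induction currency must be bqo throughout --- wqo is not preserved by these infinitary tree constructions, which is precisely why the cited results are bqo theorems. Until the embedding-transfer lemma and the exact tree class are pinned down and proved, the argument is a roadmap rather than a proof.
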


\subsection{Hereditary classes containing infinitely many prime structures}

In this subsection, we report some results included in \cite{oudrar}. We consider hereditary classes containing infinitely many prime structures. We show that each  such a class contains one which is minimal with respect to inclusion.

\begin{definition} A hereditary class $\mathcal C$ of $\Omega_{\mu}$ is \emph{minimal prime} if it contains infinitely many prime structures, while every  proper  hereditary subclass contains only finitely  many prime structures.
\end{definition}
This notion appears in the thesis of the first author \cite{oudrar} (see Theorem 5.12, p. 92, and Theorem 5.15, p. 94 of  \cite{oudrar}).

Due to their definition, minimal prime ages ordered by inclusion form an antichain with respect to set inclusion.

An ordered set $P$ is \emph{J\'{o}nsson} if $P$ is infinite and the cardinality of every proper initial segment of $P$ is strictly less than the cardinality of $P$ \cite{kearnes}. We say that $P$ is \emph{minimal} if it is infinite and every proper initial segment of $P$ is finite. This amounts to say that $P$ is a countable J\'{o}nsson poset. These posets appear quite naturally in symbolic dynamic. In fact,  an infinite word $u$ on some finite alphabet $A$ is uniformly recurrent (\cite{All-Sha}, \cite{lothaire}) if and only if the set $Fac(u)$ of its finite factors is minimal once it is ordered with the factor order.

 We list below some equivalent properties  see Proposition 4.1 of \cite{pouzet-sauer}, or Proposition 3.1 of \cite{assous-pouzet}.

\begin{theorem} \label{minimalposet} Let $P$ be an
infinite  poset. Then, the following properties are  equivalent:
\begin{enumerate}[{(i)}]
\item Every proper initial segment of $P$ is finite.
\item $P$ is w.q.o.  and all ideals distinct from $P$ are principal;
\item $P$ has no infinite antichain and all ideals distinct from $P$ are finite;
\item $P$ is level-finite, of height  $\omega$, and
 for each $n<\omega$ there is  $m<\omega$ such that each element of
height at most $n$ is below every element of height at least $m$.
 \end{enumerate}
\end {theorem}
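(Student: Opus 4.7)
The plan is to establish the cycle $(i)\Rightarrow(iii)\Rightarrow(ii)\Rightarrow(i)$ for the equivalence of the first three conditions, and then to treat $(i)\Leftrightarrow(iv)$ separately.

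For $(i)\Rightarrow(iii)$: if $A\subseteq P$ is an infinite antichain and $a\in A$, then $\downarrow(A\setminus\{a\})$ is an initial segment containing the infinite set $A\setminus\{a\}$, so by $(i)$ it must coincide with $P$; this forces $a\leq b$ for some $b\in A\setminus\{a\}$, contradicting the antichain hypothesis. Any ideal distinct from $P$ is a proper initial segment, hence finite by $(i)$. For $(iii)\Rightarrow(ii)$, a finite nonempty up-directed set has a maximum, so finite ideals are principal; to obtain w.q.o.\ it suffices to rule out an infinite descending chain $x_0>x_1>\cdots$, which fails because $\downarrow x_0$ is an ideal and in both cases ($\downarrow x_0=P$ or not) one exhibits a finite proper ideal containing infinitely many $x_i$. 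For $(ii)\Rightarrow(i)$, let $I$ be a proper initial segment; if $I$ were infinite, w.q.o.\ extracts an infinite strictly ascending chain $x_0<x_1<\cdots$ in $I$, and $J:=\bigcup_n\downarrow x_n\subseteq I\subsetneq P$ is a proper ideal, hence principal by $(ii)$, say $J=\downarrow y$; but then $y\geq x_n$ for every $n$ while $y\leq x_m$ for some $m$, giving $x_{m+1}\leq y\leq x_m$, a contradiction.

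For $(i)\Rightarrow(iv)$: $P$ is well-founded, since a descending chain inside $\downarrow x_0$ violates $(i)$ under either alternative $\downarrow x_0=P$ or $\downarrow x_0\subsetneq P$. Each element has a finite height: if $h(x)=\omega$, then $\downarrow x$ is infinite, and the same dichotomy applied to $\downarrow x$ contradicts $(i)$. Levels $L_n$ are antichains, hence finite by $(iii)$, and since $P$ is infinite the height is exactly $\omega$. For the last clause of $(iv)$, fix $n$ and observe that $L_{\leq n}:=L_0\cup\cdots\cup L_n$ is finite. For each $x\in L_{\leq n}$, the complement $\{y:y\not\geq x\}$ of the principal filter is a proper initial segment (it omits $x$), hence finite by $(i)$, so admits a bound $m_x$ on the heights of its elements. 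Taking $m=1+\max\{m_x:x\in L_{\leq n}\}$ gives the required uniform $m$. Conversely, for $(iv)\Rightarrow(i)$, a proper initial segment $I$ misses some element $z$ of height $n_0<\omega$; choosing $m=m(n_0)$ as in $(iv)$, no element of $I$ can have height $\geq m$, since that would place it above $z$ and force $z\in I$. Hence $I\subseteq L_{\leq m-1}$, which is finite by level-finiteness.

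The main subtlety lies in $(ii)\Rightarrow(i)$: one must verify carefully that the union $J$ of the principal ideals $\downarrow x_n$ is actually up-directed (so genuinely an ideal), that it is a \emph{proper} ideal (because $J\subseteq I$), and that strict ascent along the chain is what drives the contradiction when $(ii)$ forces $J$ to be principal. The height-based arguments for $(i)\Leftrightarrow(iv)$ are more bookkeeping than difficulty, resting only on well-foundedness, the absence of infinite antichains, and the finiteness of proper initial segments, all of which are in hand once $(i)$--$(iii)$ have been shown equivalent.
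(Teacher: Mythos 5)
Your proof is correct, but note that the paper itself offers no proof of this theorem: it quotes the statement from Proposition 4.1 of \cite{pouzet-sauer} and Proposition 3.1 of \cite{assous-pouzet}, so the only comparison available is with those sources, and your cycle $(i)\Rightarrow(iii)\Rightarrow(ii)\Rightarrow(i)$ together with $(i)\Leftrightarrow(iv)$ is a clean, self-contained argument of exactly the kind those references give. All the key verifications are in place: the union $J=\bigcup_n\downarrow x_n$ in $(ii)\Rightarrow(i)$ is indeed up-directed and proper, and the contradiction $x_{m+1}\leq y\leq x_m$ is the right engine; the complement $\{y: y\not\geq x\}$ of a principal filter being a proper initial segment is the correct device for the uniformity clause in $(iv)$. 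Two small points deserve tightening. First, in the step showing every element has finite height, when $\downarrow x=P$ the dichotomy must be run on the \emph{strict} down-set $\{y: y<x\}$, which is a proper initial segment (it omits $x$) yet infinite when $h(x)\geq\omega$; your phrase ``the same dichotomy applied to $\downarrow x$'' glosses over this. Second, in $(ii)\Rightarrow(i)$, extracting a \emph{strictly} ascending chain from w.q.o.\ requires enumerating infinitely many \emph{distinct} elements of $I$ and then observing that a weakly increasing subsequence of distinct elements is strictly increasing; this is worth one explicit sentence. Neither point is a gap, merely bookkeeping, and the proof as a whole is sound.
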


We have immediately (cf. Th\'eor\`eme 5.14 p.93 of \cite{oudrar}).

\begin{theorem} \label{thm:minimalprime}A hereditary class $\mathcal C$ of $\Omega_s$ is minimal prime if and only if $\prim  (\mathcal C)$
is a J\'onsson poset which is cofinal in $\mathcal C$.
\end{theorem}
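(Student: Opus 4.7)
The plan is to set up a direct dictionary between the proper hereditary subclasses of $\mathcal{C}$ and the proper initial segments of the poset $\prim(\mathcal{C})$. Since the signature $s$ is finite, $\Omega_s$ and hence $\prim(\mathcal{C})$ are countable, so ``J\'onsson'' here is equivalent to condition (i) of Theorem \ref{minimalposet}: every proper initial segment is finite. I would prove the two directions separately.

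Forward direction. Assume $\mathcal{C}$ is minimal prime; infiniteness of $\prim(\mathcal{C})$ is built into the definition. To verify the J\'onsson property, take a proper initial segment $I$ of $\prim(\mathcal{C})$ and form its hereditary closure
\[
\overline{I} := \{ S \in \Omega_s : S \leq R \text{ for some } R \in I \} \subseteq \mathcal{C}.
\]
Pick any $R^* \in \prim(\mathcal{C}) \setminus I$. Since $R^*$ is prime and $I$ is downward closed \emph{within} $\prim(\mathcal{C})$, an embedding $R^* \leq R$ with $R \in I$ would force $R^* \in I$; hence $R^* \notin \overline{I}$, and $\overline{I}$ is a proper hereditary subclass of $\mathcal{C}$. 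Minimality gives $\prim(\overline{I})$ finite, and since $I \subseteq \prim(\overline{I})$, also $I$ is finite. For cofinality, given $S \in \mathcal{C}$, the class $\mathcal{E} := \{T \in \mathcal{C} : S \not\leq T\}$ is hereditary by transitivity of embedding and proper because it omits $S$; minimality then makes $\prim(\mathcal{E})$ finite, so among the infinitely many primes of $\mathcal{C}$ there must exist $R \in \prim(\mathcal{C}) \setminus \mathcal{E}$, yielding $S \leq R$.

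Reverse direction. Assume $\prim(\mathcal{C})$ is J\'onsson and cofinal in $\mathcal{C}$. Infiniteness of $\prim(\mathcal{C})$ is part of the hypothesis. Let $\mathcal{D} \subsetneq \mathcal{C}$ be any proper hereditary subclass. Heredity of $\mathcal{D}$ makes $\prim(\mathcal{D}) = \mathcal{D} \cap \prim(\mathcal{C})$ an initial segment of $\prim(\mathcal{C})$. I would check it is proper by contraposition: if $\prim(\mathcal{C}) \subseteq \mathcal{D}$, then cofinality plus heredity would force every $S \in \mathcal{C}$ into $\mathcal{D}$, contradicting $\mathcal{D} \subsetneq \mathcal{C}$. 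The J\'onsson property then makes $\prim(\mathcal{D})$ finite, completing the proof. The argument is essentially bookkeeping once the right classes and segments are named; the only point deserving attention is the step $R^* \notin \overline{I}$, where one must exploit that $I$ is an initial segment in $\prim(\mathcal{C})$ rather than in $\mathcal{C}$, so that a prime embedding into some element of $I$ must itself belong to $I$.
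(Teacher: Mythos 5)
Your proof is correct and follows essentially the same route as the paper's: you derive the J\'onsson property by taking the hereditary closure of a proper initial segment of $\prim(\mathcal C)$ and invoking minimality (correctly spelling out the properness step $R^*\notin\overline{I}$, which the paper glosses over), and the reverse direction is the paper's argument phrased contrapositively. The only cosmetic difference is your cofinality argument, which applies minimality per element to the avoidance class $\mathcal E:=\{T\in\mathcal C : S\not\leq T\}$, whereas the paper applies it once globally to $\downarrow\prim(\mathcal C)$; both are one-line uses of the same idea.
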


\begin{proof} Let $\mathcal  C$ be a minimal prime class. By definition, $\prim  (\mathcal C)$ is infinite.
Let $\mathcal I$ be a proper hereditary subclass of $\prim  (\mathcal C)$. The initial segment $\downarrow \mathcal I$ in $\Omega_s$ is a proper subclass of $\mathcal C$. Hence $\mathcal I$ is finite. Thus $\prim (\mathcal C)$ is J\'onsson. Let $\mathcal C':= \downarrow \prim  (\mathcal C)$. If $\mathcal C'\not =\mathcal C$ then since $\mathcal C$ is minimal prime, $\prim  (\mathcal C')= \prim  (\mathcal C)$ is finite, which is impossible. This proves that the implication holds

 Conversely,  suppose that $\prim  (\mathcal C)$ is a J\'onsson poset which is cofinal in $\mathcal C$. Then
$\mathcal C$ est infinite. If $\mathcal C$ is not  minimal prime there is a	proper hereditary subclass $\mathcal C'$  of $\mathcal C$ such that $\prim  (\mathcal C')$ is infinite. Since $\prim (\mathcal C)$  is J\'onsson, $\prim  (\mathcal C')= \prim  (\mathcal C)$. Since $\mathcal C'= \downarrow \prim  (\mathcal C')$ and $\prim  (\mathcal C)$ is cofinal in $\mathcal C$, this yields $\mathcal C'= \mathcal C$, a contradiction.
\end{proof}

We have:

\begin{theorem}\label{minimal}
Every   hereditary subclass of  finite graphs, and more generally of  finite relational structures (with a given finite signature), which contains  infinitely many prime structures   contains a minimal prime hereditary subclass.
\end{theorem}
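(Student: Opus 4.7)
The strategy is to apply Zorn's Lemma to the family $\mathfrak{F}$ consisting of all hereditary subclasses of $\mathcal{C}$ that contain infinitely many prime structures, partially ordered by inclusion. A minimal element of $(\mathfrak{F},\subseteq)$ is by definition a minimal prime hereditary subclass of $\mathcal{C}$, and $\mathfrak{F}$ is non-empty since $\mathcal{C}\in\mathfrak{F}$.

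To apply Zorn, I would verify that every descending chain $(\mathcal{C}_\alpha)_\alpha$ in $\mathfrak{F}$ admits a lower bound in $\mathfrak{F}$. The natural candidate is the intersection $\mathcal{C}^{\ast}:=\bigcap_\alpha\mathcal{C}_\alpha$, which is clearly hereditary. Because the signature $s$ is finite, $\Omega_s$ is countable, so any descending chain in $\mathfrak{F}$ can be refined to a countable one $(\mathcal{C}_n)_{n\in\omega}$ with the same intersection. Thus the question becomes whether $\prim(\mathcal{C}^\ast)=\bigcap_n\prim(\mathcal{C}_n)$ is still infinite.

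This is the main obstacle. I would argue by contradiction: assume $\prim(\mathcal{C}^\ast)$ is finite and let $M$ denote the maximum size of its members. Each $\mathcal{C}_n$ contains infinitely many primes and, since only finitely many primes of each fixed size exist up to isomorphism, each $\mathcal{C}_n$ must contain primes of arbitrarily large size; pick $P_n\in\prim(\mathcal{C}_n)$ with $|P_n|\to\infty$. I would then invoke the Schmerl--Trotter theorem on critically indecomposable binary structures, which guarantees that a prime binary structure of sufficiently large size contains a prime substructure of every intermediate size above $5$. Fix $s>M$; for $n$ large enough, $P_n$ contains a prime substructure $Q_n$ of size exactly $s$. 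Since only finitely many primes of size $s$ exist up to isomorphism, the pigeonhole principle furnishes a single prime $Q^\ast$ of size $s$ which coincides with $Q_n$ for infinitely many indices $n$. Because the chain is descending, this forces $Q^\ast\in\mathcal{C}_n$ for \emph{every} $n$, hence $Q^\ast\in\mathcal{C}^\ast$, contradicting $|Q^\ast|=s>M$.

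Therefore $\mathcal{C}^\ast\in\mathfrak{F}$, Zorn's Lemma applies, and $\mathfrak{F}$ admits a minimal element, which is the desired minimal prime hereditary subclass of $\mathcal{C}$. The delicate step is the compactness/pigeonhole argument extracting prime substructures of a prescribed size from each $P_n$; this is where a substantive structural result on indecomposable binary relations (in the spirit of Theorem~\ref{ille-theorem}, or more directly Schmerl--Trotter) is indispensable. Alternatively, the same conclusion may be reached by reformulating via Theorem~\ref{thm:minimalprime} and searching for an infinite J\'onsson initial segment of the poset $\prim(\mathcal{C})$, which is infinite, well-founded (since embeddability strictly decreases size among non-isomorphic finite structures), and has all principal ideals finite.
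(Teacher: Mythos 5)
Your proposal is correct in substance, but it is organized quite differently from the paper's argument. The paper first proves that the poset $\prim_s$ of all finite prime structures is level-finite (Lemma \ref{lem:levelfinite-prims}, proved either from Theorem \ref{thm:finite-prime} on bounds of hereditarily w.q.o.\ classes, or from the Schmerl--Trotter Theorem \ref{theo:indec} via the inequality $h(R)\leq \vert R\vert \leq 2(h(R)-1)$); it then applies Zorn's Lemma not to hereditary classes but to the infinite \emph{initial segments} of the well-founded level-finite poset $\prim(\mathcal C)$, extracting a J\'onsson initial segment $D$ (Lemma \ref{lem:contains minimal}), and concludes that $\downarrow D$ is minimal prime by the characterization of Theorem \ref{thm:minimalprime}. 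You instead apply Zorn directly to the family $\mathfrak F$ of hereditary subclasses of $\mathcal C$ with infinitely many primes, and verify closure under chain intersections by fixing one size $s$ above the putative maximum $M$ and pigeonholing over the finitely many isomorphism types of primes near that size, using Schmerl--Trotter to guarantee each class of the chain contains one. The two chain-closure verifications are parallel (both are a finite pigeonhole at a fixed ``level'' along the chain, powered by Schmerl--Trotter downward extraction), but yours works with raw cardinality, whose levels are trivially finite, and so bypasses the height function, the J\'onsson machinery and Theorem \ref{thm:minimalprime} altogether, since Zorn-minimality of $\mathfrak F$ is verbatim the definition of minimal prime. What the paper's detour buys is the reusable intermediate facts (level-finiteness of $\prim_s$; the J\'onsson extraction lemma; the equivalence of Theorem \ref{thm:minimalprime}), which are used again throughout the paper; what your route buys is brevity and self-containedness.

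Two local repairs are needed, neither fatal. First, Theorem \ref{theo:indec} deletes \emph{two} vertices at a time, so iterating it inside a prime $P_n$ yields prime substructures only of every \emph{other} size, of the same parity as $\vert P_n\vert$, down to about $5$; your claim that $P_n$ contains primes of ``every intermediate size above $5$'' is false in general (for instance, every prime induced subgraph of a half-graph has even order, since in an odd-order induced subgraph the final-segment/initial-segment structure of neighbourhoods forces an isolated vertex, hence a nontrivial module). The fix is immediate: take $Q_n$ of size $s$ or $s+1$, both exceeding $M$, and pigeonhole over the finitely many primes of these two sizes; the contradiction $Q^\ast\in\prim(\mathcal C^\ast)$ with $\vert Q^\ast\vert>M$ is unaffected. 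Second, your reduction to countable chains, while valid (only countably many members of the chain are needed to exclude each element of the countable set $\Omega_s\setminus\mathcal C^\ast$), is unnecessary: along an arbitrary chain $(\mathcal C_\alpha)_\alpha$, the sets $T_\alpha$ of isomorphism types of primes of size $s$ or $s+1$ lying in $\mathcal C_\alpha$ form a chain of nonempty finite subsets of one fixed finite set, so their intersection is nonempty, giving $Q^\ast$ directly.
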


With Theorem \ref{ille-theorem}, one gets:
\begin{corollary} The age of any infinite prime structure contains a minimal prime age.
\end{corollary}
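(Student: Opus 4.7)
The plan is to apply Theorem~\ref{minimal} to the age $\age(\mathcal R)$ of an infinite prime structure $\mathcal R$. It suffices to verify that $\age(\mathcal R)$ contains infinitely many finite prime structures; the existence of a minimal prime hereditary subclass then follows, and Theorem~\ref{thm:minimalprime} will ensure that this subclass is itself an age.

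First I would show that $\age(\mathcal R)$ contains prime members of arbitrarily large cardinality. Since the domain $V$ of $\mathcal R$ is infinite, for any integer $n$ I can choose a finite $F\subseteq V$ with $|F|\ge n$. Applying Ille's compactness theorem (Theorem~\ref{ille-theorem}) to the prime structure $\mathcal R$, this $F$ extends to a finite $F'\subseteq V$ such that $\mathcal R_{\restriction F'}$ is prime. Hence $\mathcal R_{\restriction F'}$ is a prime member of $\age(\mathcal R)$ of cardinality at least $n$; letting $n$ vary yields infinitely many pairwise non-isomorphic prime members of $\age(\mathcal R)$.

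Next I would invoke Theorem~\ref{minimal} to obtain a minimal prime hereditary subclass $\mathcal C'\subseteq \age(\mathcal R)$. By Theorem~\ref{thm:minimalprime}, $\prim(\mathcal C')$ is a J\'onsson poset which is cofinal in $\mathcal C'$; condition~$(iv)$ of Theorem~\ref{minimalposet} then implies that $\prim(\mathcal C')$ is up-directed (any two elements of respective heights $\le n$ lie below any common element of height $\ge m$, for $m$ provided by the condition). Consequently $\mathcal C'=\downarrow \prim(\mathcal C')$ is a nonempty, hereditary, up-directed subclass of $\Omega_s$, i.e.\ an age. I do not anticipate a genuine obstacle: the argument is essentially a two-step chain combining Ille's compactness theorem with Theorem~\ref{minimal}, and the only delicate point is this last verification that a minimal prime hereditary class is automatically an ideal, so that the conclusion really delivers a minimal prime \emph{age}.
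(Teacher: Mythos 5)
Your proposal is correct and takes essentially the same route as the paper, which derives the corollary by combining Ille's compactness theorem (giving prime members of $\age(\mathcal R)$ of unbounded cardinality) with Theorem~\ref{minimal}. Your closing verification that the resulting minimal prime class is an ideal, hence an age, is exactly the content of the paper's Theorem~\ref{thm:main1}, whose proof likewise rests on the up-directedness of a J\'onsson poset via Theorem~\ref{thm:minimalprime}.
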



With  Theorems \ref{minimalposet} and \ref{thm:finite-prime} we get:

\begin{theorem} \label{thm:main1} Every  minimal prime hereditary class is the age of some prime structure; furthermore this age is well-quasi-ordered.
\end{theorem}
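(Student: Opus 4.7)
The plan is to combine the four preceding theorems in a straightforward way. By Theorem \ref{thm:minimalprime}, the set $\prim(\mathcal{C})$ is a J\'onsson (minimal) poset cofinal in $\mathcal{C}$. Applying Theorem \ref{minimalposet}, clause (iv), $\prim(\mathcal{C})$ is countable, level-finite of height $\omega$, and has the property that for every pair of elements, taking $n$ to be the maximum of their heights and picking the corresponding $m$, any element of height at least $m$ dominates both. Hence $\prim(\mathcal{C})$ is up-directed, and cofinality transfers this up-directedness to $\mathcal{C}$ itself; in particular $\mathcal{C}$ is an age in the sense of Fra\"{\i}ss\'e.

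To realize $\mathcal{C}$ as the age of a single \emph{prime} structure, I would enumerate $\prim(\mathcal{C})=\{p_0,p_1,p_2,\ldots\}$ and build inductively a chain $q_0\le q_1\le q_2\le\cdots$ of elements of $\prim(\mathcal{C})$ by choosing, via up-directedness, $q_{n+1}$ to be a common upper bound of $q_n$ and $p_{n+1}$. Realizing the successive embeddings as genuine inclusions on a nested sequence of underlying sets, the union $R:=\bigcup_n q_n$ is a countable binary structure in which each $q_n$ is an induced substructure. Since every finite $F\subseteq R$ is contained in some $q_n$, and $q_n$ is prime, Theorem \ref{ille-theorem} applies and $R$ is prime. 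The cofinality of the $q_n$ in $\prim(\mathcal{C})$, combined with the cofinality of $\prim(\mathcal{C})$ in $\mathcal{C}$, yields $\age(R)=\mathcal{C}$.

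For the well-quasi-ordering claim, proceed by contradiction: suppose $\{a_0,a_1,a_2,\ldots\}\subseteq\mathcal{C}$ is an infinite antichain. Consider the hereditary subclass
\[
\mathcal{C}'':=\{c\in\mathcal{C}:a_0\not\le c\}.
\]
Since $a_0\notin\mathcal{C}''$, this is a proper hereditary subclass of $\mathcal{C}$, so by the minimal prime hypothesis $\prim(\mathcal{C}'')$ is finite. Theorem \ref{thm:finite-prime} then forces $\mathcal{C}''$ to be hereditarily w.q.o., and in particular w.q.o. But $\{a_1,a_2,\ldots\}\subseteq\mathcal{C}''$ (no $a_i$ for $i\ge 1$ can dominate $a_0$, or the antichain condition would fail) and is an infinite antichain there, a contradiction.

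The only genuinely delicate point is the inductive construction of $R$: one must arrange the chosen embeddings $q_n\hookrightarrow q_{n+1}$ so that the underlying sets form an actual chain, permitting Ille's compactness criterion to be invoked verbatim. This is routine bookkeeping rather than a real obstacle; the substance of the proof is the three assembled arguments above, and I expect no further surprises.
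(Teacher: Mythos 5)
Your proposal is correct and follows essentially the same route as the paper: realize $\mathcal{C}=\downarrow \prim(\mathcal{C})$ with $\prim(\mathcal{C})$ J\'onsson via Theorem \ref{thm:minimalprime}, extract a cofinal chain of primes whose limit $R$ is prime (you make explicit the appeal to Theorem \ref{ille-theorem} that the paper leaves implicit in ``since the $R_n$'s are prime, $R$ is prime''), and for well-quasi-ordering observe that $\mathcal{C}\setminus(\uparrow\{a_0\})$ is a proper hereditary subclass, hence has only finitely many primes and is w.q.o.\ by Theorem \ref{thm:finite-prime}. Your antichain formulation is just the contrapositive of the paper's argument with $R=a_0$, so there is nothing further to fix.
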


A non-trivial improvement  of Theorem \ref{thm:main1} is based on the notion of kernel (a notion introduced in \cite{pouzettr} and studied it in several papers \cite{pouzet-minimale} \cite{pouzet.81}, \cite{pouzet-impartible} Lemme IV-3.1 p. 37 and \cite{pouzet-sobranisa}).

The \emph{ kernel } of  a relational structure $\mathcal{R}$ with domain $E$ is
the subset $K(R)$ of $x\in
E$ such that  $Age(\mathcal{R}\restriction_{E\setminus \{x\}})\not = Age (\mathcal {R})$.
 As it is easy to see (cf \cite{pouzet-minimale}\cite{pouzet-sobranisa}), the kernel of a relational structure $R$ is empty if and only if
for every finite subset $F$ of $E$ there is a disjoint subset $F'$ such that the
restrictions $R\restriction_ F$ and $R\restriction_{F'}$ are isomorphic. Hence,
relational structures with empty kernel are those for which their age has the \emph{disjoint embedding property}, meaning that two arbitrary members of the age can be embedded into a third in such a way that their domain are disjoint.


\begin{theorem} \label{thm:main2}Let $\mathcal C$ be a minimal prime class. If the kernel of some structure $\mathcal{R}$ such that $Age (\mathcal{R})=\mathcal C$ is non-empty,  then $\mathcal C$ is hereditarily wqo and, in particular,  this kernel is finite.

\end{theorem}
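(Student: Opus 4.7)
The plan is to leverage the nonemptiness of the kernel to strictly reduce the age and then bootstrap via Theorem~\ref{thm:finite-prime}. Fix some $x\in K(\mathcal{R})$ and set $\mathcal{R}':=\mathcal{R}\restriction_{E\setminus\{x\}}$ and $\mathcal{C}':=\age(\mathcal{R}')$. By the definition of the kernel, $\mathcal{C}'\subsetneq\mathcal{C}$. Since $\mathcal{C}$ is minimal prime, the proper hereditary subclass $\mathcal{C}'$ contains only finitely many primes, so Theorem~\ref{thm:finite-prime} yields that $\mathcal{C}'$ is hereditarily well-quasi-ordered.

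Next, I would lift hwqo from $\mathcal{C}'$ up to $\mathcal{C}$. Every finite substructure of $\mathcal{R}$ either avoids $x$, and so already lies in $\mathcal{C}'$, or is obtained from a substructure of $\mathcal{R}'$ by adjoining the single vertex $x$ together with the binary relations linking $x$ to the remaining vertices. Because the signature is finite, the possible types of relation between $x$ and a given vertex form a finite set, so the adjunction of $x$ amounts to labelling each vertex of a $\mathcal{C}'$-structure with a label from a finite alphabet and attaching a single distinguished marker. Since hwqo is preserved under finite labellings and under the adjunction of one distinguished vertex---a Higman-type closure, using that the class $\mathcal{C}'$ labelled in a wqo alphabet is again wqo---it follows that $\mathcal{C}$ is hwqo.

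For the finiteness of $K(\mathcal{R})$, I would argue via the bounds of $\mathcal{C}'$. Being hwqo, $\mathcal{C}'$ has only finitely many bounds in $\Omega_s$; those lying in $\mathcal{C}$ give a finite list $B_1,\ldots,B_n$ of minimal members of $\mathcal{C}\setminus\mathcal{C}'$. For each $B_i$, every embedding into $\mathcal{R}$ must contain $x$, and the intersection $\mathrm{Ker}(B_i,\mathcal{R})$ of all its embedding images is a finite subset of $E$ of size at most $|B_i|$. Writing $K(\mathcal{R})=\bigcup_{T\in\mathcal{C}}\mathrm{Ker}(T,\mathcal{R})$ as a directed union of finite sets (with $\mathrm{Ker}(\cdot,\mathcal{R})$ monotone in its argument), the remaining task is to show that this union is already captured by the finitely many $\mathrm{Ker}(B_i,\mathcal{R})$. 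This last step is the main obstacle: it does not follow formally from the monotonicity of $\mathrm{Ker}$, and it requires exploiting both the hwqo of $\mathcal{C}$ just established and the J\'onsson/cofinality structure of $\prim(\mathcal{C})$ furnished by Theorem~\ref{minimalposet}, so as to trace the essentialness of an arbitrary $y\in K(\mathcal{R})$ back to one of the fixed bound kernels.
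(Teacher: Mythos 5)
You are working against an extended abstract: the paper states Theorem~\ref{thm:main2} without any proof, so the comparison can only be with the evidently intended route, and your first two steps surely coincide with it. Fixing $x\in K(\mathcal{R})$, noting that $\mathcal{C}':=\age(\mathcal{R}\restriction_{E\setminus\{x\}})$ is a \emph{proper} hereditary subclass of $\mathcal{C}$ (this is exactly what $x\in K(\mathcal{R})$ says), invoking minimal primality to get $\prim(\mathcal{C}')$ finite, and then Theorem~\ref{thm:finite-prime} to get $\mathcal{C}'$ hereditarily wqo, is correct. Your lifting step is also sound, though sketched: every member of $\mathcal{C}$ has a realization in $\mathcal{R}$ that either avoids $x$ or is a one-point extension of a member of $\mathcal{C}'$, and since the signature is finite and binary, the links of $x$ to the other vertices are encoded by labels from a finite set $T$ of types. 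To make it airtight you should say that $T$ must be ordered as an \emph{antichain} (type labels must be preserved, not merely increased, for the extension by $x\mapsto x$ to be an embedding), that the self-type of $x$ and its $Q$-label range over the wqo product of a finite set with $Q$, and that a sequence of $Q$-labelled members of $\mathcal{C}$ splits into those avoiding $x$ (handled by hwqo of $\mathcal{C}'$ with labels $Q$) and those containing it (handled by hwqo of $\mathcal{C}'$ with labels $Q\times T$). All of this is routine, and it proves the main assertion.

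The genuine gap is the one you flag yourself: the finiteness of $K(\mathcal{R})$, and the mechanism you propose for it cannot close. The bounds $B_1,\ldots,B_n$ of $\mathcal{C}'$ lying in $\mathcal{C}$ certify only that $x$ is essential --- every embedding of $B_i$ into $\mathcal{R}$ meets $x$ --- but an arbitrary $y\in K(\mathcal{R})$ is witnessed by some other structure $T_y\in\mathcal{C}\setminus\age(\mathcal{R}\restriction_{E\setminus\{y\}})$, and nothing forces $y$ to lie in any of the finitely many sets $\mathrm{Ker}(B_i,\mathcal{R})$. Your identity $K(\mathcal{R})=\bigcup_{T\in\mathcal{C}}\mathrm{Ker}(T,\mathcal{R})$ and the monotonicity of $\mathrm{Ker}(\cdot,\mathcal{R})$ are both correct, but the union is a directed union of finite sets whose stabilization is \emph{equivalent} to the finiteness being proved, so the plan is circular as stated; nor does the J\'onsson/cofinality structure rescue it by itself --- one can only deduce this way that each ``level set'' $\{y\in K(\mathcal{R}): \text{all primes of height}\geq m \text{ must meet } y\}$ is finite, which is compatible with an infinite kernel. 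The ingredient the theorem's ``in particular'' actually appeals to is external: the classical result of the kernel literature the paper cites (\cite{pouzet-minimale}, \cite{pouzet.81}, \cite{pouzet-impartible}, \cite{pouzet-sobranisa}) that a relational structure whose age is (hereditarily) well quasi ordered has a finite kernel. Once your first two steps establish that $\mathcal{C}=\age(\mathcal{R})$ is hereditarily wqo, finiteness of $K(\mathcal{R})$ follows by quoting that lemma; your proof should cite or reprove it rather than route through the bounds of $\mathcal{C}'$.
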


\begin{corollary} There are at most countably many minimal prime classes $\mathcal C$  such that $\mathcal C =Age (\mathcal{R})$ and $Ker(\mathcal{R})\not =\emptyset$.
\end{corollary}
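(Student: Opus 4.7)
The plan is to reduce the counting to the fact, recalled in the introduction, that a hereditarily well-quasi-ordered hereditary class of $\Omega_s$ is determined by finitely many bounds, and that only countably many such finite sets of bounds exist.

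First I would apply Theorem \ref{thm:main2}: if $\mathcal{C}$ is a minimal prime class realized as $\mathcal{C} = \age(\mathcal{R})$ with $Ker(\mathcal{R}) \neq \emptyset$, then $\mathcal{C}$ is hereditarily well-quasi-ordered. By the theorem of \cite{pouzet72} recalled in the introduction, any hereditarily w.q.o.\ hereditary class of finite relational structures has only finitely many bounds. Denote by $B(\mathcal{C})$ the (finite) set of bounds of such a $\mathcal{C}$.

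Next I would observe that the map $\mathcal{C} \mapsto B(\mathcal{C})$ is injective on hereditary subclasses of $\Omega_s$: indeed, a hereditary class is by definition the set of members of $\Omega_s$ into which no bound embeds, so $\mathcal{C}$ is uniquely recoverable from $B(\mathcal{C})$. Since the signature $s$ is finite, $\Omega_s$ is countable up to isomorphism (each finite structure is specified by a finite amount of data on a finite vertex set), hence the collection of finite subsets of $\Omega_s$ is also countable. Consequently there are at most countably many possible values of $B(\mathcal{C})$, and therefore at most countably many minimal prime classes $\mathcal{C}$ satisfying the hypothesis.

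The only substantive input is Theorem \ref{thm:main2}; the rest is a direct parameterization and counting argument, so I do not anticipate any real obstacle beyond invoking the prerequisites correctly.
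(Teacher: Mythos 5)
Your argument is correct and is exactly the intended one: the paper states this corollary without proof immediately after Theorem \ref{thm:main2}, the implicit reasoning being precisely that hereditary w.q.o.\ gives finitely many bounds (by the result of \cite{pouzet72} recalled in the introduction), that a hereditary class is recovered from its bound set, and that there are only countably many finite subsets of the countable class $\Omega_s$. Your explicit verification of the injectivity of $\mathcal{C} \mapsto B(\mathcal{C})$ fills in the routine details the paper leaves to the reader.
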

\begin{problem}
Is it true that $\vert Ker(\mathcal{R})\vert \leq 2$?
\end{problem}
As we will see, the  answer is positive if one considers minimal prime classes of graphs. In this case, there are only four leading to nonempty kernel.

\subsection{A Proof of Theorem $\ref{minimal}$.} We will need the following lemma which is a special case of Theorem 4.6 of \cite{assous-pouzet}.

\begin{lemma}\label{lem:levelfinite-prims}$\prim_{s}$ is level finite.
\end{lemma}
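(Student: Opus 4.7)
The plan is to show that at each level $n$ of the poset $(\prim_{s},\leq)$ the number of isomorphism classes of prime structures is finite, by bounding the size of a prime structure in terms of its level and then invoking the finiteness of the signature. The embedding quasi order on finite binary structures is well founded (a strict embedding strictly decreases cardinality), so the level function is well defined on $\prim_{s}$ to begin with.

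The key ingredient I would invoke is a Schmerl--Trotter style shrinking theorem for prime binary structures. Its statement for arbitrary finite binary signatures is precisely the special case of Theorem~4.6 of \cite{assous-pouzet} which the authors flag, and reads: there is a constant $c = c(s)$ such that every $\mathcal R \in \prim_{s}$ with $|\mathcal R| = n \geq c$ contains an induced prime substructure of size $n-1$ or $n-2$. Iterating this downward from $\mathcal R$, one extracts a strict chain $\mathcal R_0 < \mathcal R_1 < \cdots < \mathcal R_k = \mathcal R$ in $\prim_{s}$ with $k \geq \lfloor (n-c)/2 \rfloor$, so the level $\ell(\mathcal R)$ of $\mathcal R$ in $\prim_{s}$ satisfies $\ell(\mathcal R) \geq \lfloor (n-c)/2 \rfloor$.

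From this I would finish as follows. Any $\mathcal R \in \prim_{s}$ at level $n$ has cardinality at most $2n + c$. Since the signature $s$ is finite, for each fixed $k$ there are only finitely many isomorphism classes of binary structures of signature $s$ on $k$ points, hence \emph{a fortiori} only finitely many prime ones. The set of primes at level $n$ is therefore contained in the finite union, over $k \leq 2n+c$, of primes of size exactly $k$, and so is finite.

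The main obstacle is the shrinking step itself. For graphs it is the classical Schmerl--Trotter theorem, but for arbitrary finite binary signatures one must appeal to the more general statement of Theorem~4.6 of \cite{assous-pouzet}; Ille's compactness theorem (Theorem~\ref{ille-theorem}) alone does not give a linear lower bound on levels. Once the shrinking is granted, the reduction from level-finiteness to counting isomorphism classes of bounded size is essentially bookkeeping.
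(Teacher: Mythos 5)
Your proof is correct and is essentially the paper's second proof of the lemma: the shrinking step you need is exactly the Schmerl--Trotter theorem (Theorem~\ref{theo:indec}), which is already stated for arbitrary prime \emph{binary} structures of order $n\geq 7$ with an absolute constant rather than a signature-dependent $c(s)$, and from it the paper derives $h(R)\leq \vert R\vert \leq 2(h(R)-1)$ and concludes, as you do, by finiteness of the number of structures of a given order over a finite signature. One attribution fix: the shrinking theorem should be cited to Schmerl and Trotter \cite{S-T}, not to Theorem~4.6 of \cite{assous-pouzet} (which the paper invokes for the lemma statement itself); note also that the paper's first proof is genuinely different, arguing by contradiction from a minimal infinite level via Theorem~\ref{thm:finite-prime} and the finiteness of bounds of hereditarily w.q.o.\ classes.
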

\begin{proof}Suppose for a contradiction that there exists an integer $n\geq 0$ such that the level ${\prim_s}(n)$ of $\prim_s$ is infinite and choose $n$ smallest with this property. Define
$$\mathcal{C}:=\{R\in \Omega_{s}: R< S \text{ for some } S\in {\prim}_{s}(n)\}.$$
Then $\mathcal{C}$ is a hereditary class of $\Omega_{s}$ containing only finitely many prime structures. It follows from Theorem \ref{thm:finite-prime} that $\mathcal{Cer}$ is hereditary well-quasi-ordered and hence has finitely many bounds. This is not possible since the elements of ${\prim_s}(n)$ are  bounds of $\mathcal{C}$.
\end{proof}

\subsubsection{Another proof of Lemma \ref{lem:levelfinite-prims}}
We prove the finiteness of the levels of $\prim_{s}$ via the properties of critical primality. A binary structure $R:=(V,(\rho_i)_{i\in I})$ is \textit{critically prime} if it is prime and   $R_{V\setminus \{x\}}$ is not prime for every $x\in V$.  Note that $\vert R\vert$ has at least four elements.
This notion of critical primality was introduced by Schmerl and Trotter \cite{S-T}. Among results given in their paper, we have the following theorem (this is Theorem 5.9, page 204):

\begin{theorem}\label{theo:indec}
 Let $R :=(V,(\rho_i)_{i\in I})$ be a prime binary structure of order $n\geqslant 7$. Then there are distinct $c, d\in E$ such that $V\setminus \{c,d\}$ is prime.
\end{theorem}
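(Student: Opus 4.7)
The plan is to split by whether $R$ is \emph{critically prime}, i.e.\ whether every vertex $x\in V$ is critical in the sense that $R[V\setminus\{x\}]$ fails to be prime. If some vertex $x$ is non-critical, then $R':=R[V\setminus\{x\}]$ is prime of order $n-1\geq 6$. If $R'$ in turn contains a non-critical vertex $y$, then $R[V\setminus\{x,y\}]=R'[V\setminus\{y\}]$ is prime and we are done, with $c:=x$, $d:=y$. So the whole difficulty is concentrated in the case where, after at most one peel, the remaining prime substructure on $\geq 6$ vertices is critically prime.

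To deal with that case my plan is to invoke the structural classification of critically prime binary structures (a companion result in the Schmerl--Trotter paper, itself provable from Gallai's modular decomposition). The classification says that a critically prime structure $S$ of order $m\geq 5$ must have $m$ even and admits an enumeration $v_{1},\dots,v_{m}$ of its vertices along which the critical vertices come in consecutive \emph{pairs}: removing $v_{i}$ produces exactly one nontrivial module of $S\setminus\{v_{i}\}$, and this module is $\{v_{i-1},v_{i}\}$ or $\{v_{i},v_{i+1}\}$. This already rules out critical primality for odd orders $\geq 5$, so for odd $n\geq 7$ the first paragraph applies twice and yields the conclusion directly. For even $n\geq 8$, once we know the enumeration, choose $c$ and $d$ from two \emph{different} critical pairs; the unique nontrivial module obstructing primality of $R\setminus\{c\}$ and of $R\setminus\{d\}$ involves only vertices of $c$'s pair, resp.\ $d$'s pair, and each such module is destroyed as a module of $R[V\setminus\{c,d\}]$ because its partner vertex still separates it from the remaining vertices. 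A short case analysis (checking that no \emph{new} nontrivial module can appear) then shows $R[V\setminus\{c,d\}]$ is prime.

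The key obstacle is justifying the critical-pair enumeration. I would prove it by picking two distinct critical vertices $v,w$ and comparing the (essentially unique) nontrivial modules $M_{v}\subseteq V\setminus\{v\}$ and $M_{w}\subseteq V\setminus\{w\}$ provided by non-primality of $R\setminus\{v\}$ and $R\setminus\{w\}$. Primality of $R$ forces each of the pieces $M_{v}\cap M_{w}$, $M_{v}\setminus(M_{w}\cup\{w\})$, $M_{w}\setminus(M_{v}\cup\{v\})$ to have size $\leq 1$, which, propagated over all pairs $(v,w)$, collapses every $M_{v}$ to a doubleton and yields the global pairing. This combinatorial forcing step — systematically using primality of $R$ to constrain the Gallai modular trees of all one-vertex removals simultaneously — is where all of the work lives; once it is in place, the assembly of the theorem is purely bookkeeping.
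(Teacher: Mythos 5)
The paper does not prove this statement at all: it is quoted verbatim as Theorem 5.9, p.~204, of Schmerl and Trotter \cite{S-T}, so your attempt has to be measured against their argument. Measured that way, there is a genuine gap, and it sits exactly where you located ``all of the work'': the classification of critically prime structures that you invoke is false in the generality the theorem requires. The even-order theorem of Schmerl and Trotter holds for \emph{graphs and posets only}; for arbitrary binary structures --- and the statement here is about arbitrary $(V,(\rho_i)_{i\in I})$ --- critically prime structures of odd order exist. The circular tournament $U_5$ on $\{0,\dots,4\}$, in which $i$ beats $i+1$ and $i+2 \pmod 5$, is prime, yet deleting any vertex leaves a decomposable tournament (deleting $4$ makes $\{1,2\}$ a module), so it is critically prime of odd order $5$; the same holds for every $U_{2k+1}$. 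The paper itself flags this: Schmerl and Trotter exhibit critically prime \emph{tournaments} and general binary structures, not just graphs and posets. So your parity argument, which disposes of all odd $n\geq 7$ in one stroke, collapses. (A smaller slip: your claimed unique nontrivial module of $S\setminus\{v_i\}$ is written as $\{v_{i-1},v_i\}$ or $\{v_i,v_{i+1}\}$, which contains the deleted vertex $v_i$; presumably a typo, but the corrected statement is still unavailable at this level of generality.)

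Even where the classification does hold, your bookkeeping has a second hole. If $n$ is odd and $R$ is not critically prime, one peel leaves a prime $R'$ of \emph{even} order $n-1\geq 6$, which nothing prevents from being critically prime; then no single further deletion works, and deleting one of your critical pairs from $R'$ lands at order $n-3$, not the required $n-2$. To finish you would need to re-choose the first deleted vertex or extract a pair from $R$ directly --- and that is precisely the hard content of \cite{S-T}, who obtain Theorem 5.9 from a finer analysis of critically indecomposable structures of arbitrary signature (their ``orderly'' enumerations, with no parity hypothesis), not from the even-order classification. Your final paragraph's program --- comparing the modules $M_v$, $M_w$ of one-vertex deletions and using primality of $R$ to force them small --- is in the right spirit of their Section 5 lemmas, but as proposed it is made to culminate in a global perfect pairing and even order, which the tournament examples show cannot be proved. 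The proposal therefore does not establish the theorem.
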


%
In their paper, Schmerl and Trotter give examples of critically prime structures within the class of graphs, posets, tournaments, oriented graphs and binary relational structures. The set of critical prime structures within each of these classes  is a finite union of chains.

Decompose $Prim_s$  into levels;  in level $i$, with $i\leq 2$,
are the structures of order zero, one or two.

For structures $R$ in $Prime_s$ of order at least $2$, we have the following  relationship between the height $h(R)$ in $Prime_s$   and its order, $\vert R \vert$ (which is the height of $\mathcal R$ in $\Omega_s$).

\begin{equation} \label{ineq:1}
    h(R) \leqslant \vert R \vert  \leqslant 2(h(R)-1).
      \end{equation}

The first inequality is obvious. For the second, we use induction on $n:= h(R)\geq 2$. The basis step $n=2$ is trivially true. Suppose $n>2$. Let $S$ be prime such that $S$ embeds in $R$ with $h(S)=n-1$. From the induction hypothesis, $\vert S\vert \leq 2(h(S)-1)= 2(n-2)$. According to Theorem \ref {theo:indec}, $\vert R \vert -2 \leq \vert S \vert $. Hence $\vert R\vert -2 \leq 2(n-2)$. Therefore $\vert R\vert \leq 2(n-1)$.

Lemma \ref{lem:levelfinite-prims} follows from the second inequality in (\ref{ineq:1}) since there are only finitely many structures of a given order.

\begin{lemma} \label{lem:contains minimal}Every infinite well-founded poset $P$ which is level finite contains an  initial segment which is J\'onsson.
\end{lemma}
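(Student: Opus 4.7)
The plan is to realise a J\'onsson initial segment of $P$ as a $\subseteq$-minimal element of
\[
\mathcal{J}:=\{J\subseteq P : J \text{ is an infinite initial segment of } P\},
\]
obtained by Zorn's lemma applied to $(\mathcal{J},\subseteq)$ in the direction of descending chains. Note that $\mathcal{J}$ is nonempty since $P\in\mathcal{J}$. If $I^{*}$ is $\subseteq$-minimal in $\mathcal{J}$, then any proper initial segment of $I^{*}$ is also an initial segment of $P$ strictly contained in $I^{*}$, so by minimality it cannot lie in $\mathcal{J}$, i.e.\ it must be finite. Since $I^{*}$ itself is infinite, every proper initial segment of $I^{*}$ then has strictly smaller cardinality than $I^{*}$, which is exactly the J\'onsson property. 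It thus remains to check the chain hypothesis of Zorn's lemma.

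Denote by $P_{n}$ the set of elements of height $n$ in $P$; by hypothesis each $P_{n}$ is finite for every $n<\omega$. I first establish a \emph{sub-lemma}: every $J\in\mathcal{J}$ satisfies $J\cap P_{n}\neq\emptyset$ for every $n<\omega$. Since $J$ is infinite and every $P_{m}$ is finite, $J$ contains elements of arbitrarily large finite height: otherwise $J$ would, being infinite, contain an element of some height $\geq\omega$, whose downward cone in a well-founded poset already meets every finite height. Next, a short induction on the height of $y$, using $\mathrm{height}(y)=\sup\{\mathrm{height}(w)+1:w<y\}$, shows that whenever $\mathrm{height}(y)\geq n$ there exists $z\leq y$ with $\mathrm{height}(z)=n$. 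Applied to some $y\in J$ with $\mathrm{height}(y)\geq n$, this yields $z\in J\cap P_{n}$, because $J$ is a down-set.

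Now let $\mathcal{C}\subseteq\mathcal{J}$ be any chain under $\subseteq$, and put $I:=\bigcap\mathcal{C}$, which is an initial segment of $P$. For each fixed $n<\omega$ the family $\{J\cap P_{n}:J\in\mathcal{C}\}$ is a $\subseteq$-chain inside the \emph{finite} power set $2^{P_{n}}$, hence it is itself a finite chain and attains a minimum $S_{n}$. By the sub-lemma every member of this chain is nonempty, so $S_{n}\neq\emptyset$; moreover $S_{n}\subseteq J\cap P_{n}$ for every $J\in\mathcal{C}$, from which one reads off $I\cap P_{n}=S_{n}\neq\emptyset$. Therefore $I$ meets every $P_{n}$ with $n<\omega$, so $I$ is infinite, $I\in\mathcal{J}$, and $I$ is a lower bound for $\mathcal{C}$. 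Zorn's lemma then yields a $\subseteq$-minimal element of $\mathcal{J}$, as wanted.

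The only delicate point in this approach is precisely the step showing that $\bigcap\mathcal{C}$ stays infinite: a priori a descending chain of infinite initial segments can collapse to a finite, or even empty, intersection, and then Zorn would fail to apply. It is exactly the combination of level-finiteness (making $2^{P_{n}}$ finite, so that its $\subseteq$-chains stabilise) and well-foundedness (making every infinite down-set in $P$ meet every finite level) that rules this collapse out; everything else in the argument is bookkeeping.
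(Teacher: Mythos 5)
Your proof is correct and follows essentially the same route as the paper: apply Zorn's Lemma (in the descending direction) to the infinite initial segments of $P$, using level-finiteness to show that the intersection of a chain still meets every level $P_n$ and hence stays infinite, so that a $\subseteq$-minimal member has all proper initial segments finite and is therefore J\'onsson. The only cosmetic difference is that the paper restricts $\mathcal J$ to initial segments contained in the first $\omega$ levels, whereas you allow arbitrary infinite initial segments and supply the sub-lemma (handling elements of infinite height via their downward cones) showing each such segment meets every finite level --- a point the paper's proof uses but leaves implicit.
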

\begin{proof}
We  apply Zorn's Lemma to the set  $\mathcal J$ of infinite initial segments of $P$ included in the first $\omega$-levels. For that, we prove that $\mathcal J$ is closed under intersections of nonempty chains. Indeed,  let $\mathcal C$ be a non-empty chain (with respect to set inclusion) of  members of $\mathcal J$. Set $J:= \cap \mathcal C$. Let $n<\omega$, let $P_n$ be the $n$-th level of $P$ and $\mathcal {C}_n:=  \{C\cap P_n: C \in \mathcal C\}$. The members of $\mathcal {C}_n$ are finite, nonempty and totally ordered by inclusion. Hence,  $J_n:=\cap \mathcal {C}_n$ is non-empty. Since $J=\cup \{ J_n: n\in \NN\}$, $J\in \mathcal J$.
\end{proof}

The proof of Theorem $\ref{minimal}$ goes as follows. Let $\mathcal C$ be a hereditary class of $\Omega_s$ such that $J:= \prim_s(\mathcal C)$ is infinite. Since $\prim_{s}$ is level finite,   Lemma \ref {lem:contains minimal} ensures that $J$ contains an initial segment $D$ which is J\'onsson. According to Theorem \ref{thm:minimalprime}, $\downarrow D$ is minimal prime. This completes the proof.\hfill $\Box$

\subsection{A Proof of Theorem  $\ref {thm:main1}$.}  Let $\mathcal{C}$ be a minimal prime hereditary class. We first prove that it is the age of a prime structure. It follows from Theorem \ref{thm:minimalprime} that $\mathcal{C}=\downarrow D$ where $D$ is J\'{o}nsson. Since $D$ is J\'{o}nsson, it is up-directed. Thus $\mathcal{C}$ is an age. Since $D$ is up-directed and countable,  it contains a cofinal sequence $R_0\leq R_1\leq \ldots <R_n\leq \ldots$. We may define the limit $R$ of these $R_n$. Since the $R_n$'s are prime,  $R$ is prime and $\age(R)=\mathcal{C}$.

Next we prove that $\mathcal{C}$ is w.q.o.  Since $D$ is J\'{o}nsson,  it is w.q.o. . To prove that $\mathcal{C}$ is w.q.o., let $R\in \mathcal{C}$ and consider $\mathcal{C}\setminus (\uparrow \{R\}$). In order to prove that $\mathcal{C}$ is w.q.o.  it is enough to prove that $\mathcal{C}\setminus (\uparrow \{R\}$)  is w.q.o.  by embeddability. Indeed, an antichain that contains $R$ must be in $\mathcal{C}\setminus (\uparrow \{R\}$). Now to prove that $\mathcal{C}\setminus (\uparrow \{R\}$) is w.q.o.  we note that since $\mathcal{C}\setminus (\uparrow \{R\}$) is a proper hereditary class in $\mathcal{C}$, hence it contains only finitely many primes. It follows from Theorem \ref{thm:finite-prime}  that $\mathcal{C}\setminus (\uparrow \{R\})$  is w.q.o. . \hfill $\Box$

\section{Minimal prime ages of graphs}
Our characterization of minimal prime ages is based on a previous characterization of unavoidable prime graphs in large finite prime graphs \cite{chudnovsky}, and our study of graphs associated to $0$-$1$ sequences.

Citing Chudnovsky and al \cite{chudnovsky} :

\begin{theorem}\label{thm:chudnovsky}
For all $n$, there exists $N$ such that every prime graph with at least $N$ vertices contains one of the following graphs or their complements as an induced subgraph:
\begin{enumerate}
\item the graph obtained from $K_{1,n}$ by subdividing every edge once,
\item the line graph of $K_{2,n}$,
\item the line graph of the graph in $(1)$,
 \item the half-graph of height $n$,
 \item a prime graph induced by a chain of length $n$,
 \item  two particular graphs obtained from the half-graph of height $n$ by making one side a clique and adding one vertex.
\end{enumerate}
\end{theorem}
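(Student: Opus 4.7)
The plan is to combine a Ramsey-theoretic extraction with the Schmerl--Trotter criticality theorem (Theorem \ref{theo:indec}). Since the class of prime graphs is closed under complementation, it suffices to produce one of the six configurations or its complement as an induced subgraph. Given a prime graph $G$ on many vertices, I would first iterate Theorem \ref{theo:indec} to build a long chain $G_0 < G_1 < \cdots < G_m$ of prime induced subgraphs of $G$, each $G_{i+1}$ obtained from $G_i$ by adding one or two vertices. By inequality (\ref{ineq:1}) the length $m$ grows linearly with $|V(G)|$, so for $|V(G)|$ sufficiently large in terms of $n$, the chain can be made arbitrarily long.

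Next, I would classify each extension step $G_i\to G_{i+1}$ by an ``extension type'', namely the isomorphism pattern of adjacency between the one or two new vertices and a fixed bounded window of previously added vertices. The number of extension types is finite (bounded by a function of the window size), so a pigeonhole (or equivalently an infinite Ramsey argument on the chain) extracts an arbitrarily long subchain along which the extension type is constant. A uniform repeated extension type is exactly what forces the appearance of one of the six canonical growing prime families: an ``attach to the last vertex'' pattern builds up a half-graph (item (4)) or a prime graph arising from a chain (item (5)); an ``attach a pair at an anchor'' pattern builds up subdivided stars (item (1)), their line graphs (item (3)), or the line graph of $K_{2,n}$ (item (2)); and mixed patterns in which one side saturates into a clique while the other side remains a half-graph, together with a distinguishing vertex, account for the two graphs in item (6). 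A complement step at the end handles the ``or their complements'' clause.

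The main obstacle, I expect, is the delicate balance in the definition of an ``extension type'': too coarse a notion and the uniform subchain need not reassemble into one of the six named families, while too fine a notion makes the number of types explode and forces iterated Ramsey on high-arity tuples. A secondary difficulty is that Theorem \ref{theo:indec} does not control \emph{which} two vertices one removes at each step, so to stabilize the chain up to isomorphism one must perform an auxiliary Ramsey argument on the adjacency vectors of the newly added vertices against the window; the quantitative dependence of $N$ on $n$ then becomes of tower type, which is expected but requires careful bookkeeping. An alternative, compactness-flavoured route would proceed by contradiction: take an infinite prime graph avoiding all six patterns and their complements (using Theorem \ref{ille-theorem} to pass to a countable such object), and derive a contradiction by showing that the avoidance forces either a nontrivial module or one of the patterns after all.
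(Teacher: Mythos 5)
There is nothing in the paper to compare your attempt against: Theorem \ref{thm:chudnovsky} is quoted verbatim from Chudnovsky, Kim, Oum and Seymour \cite{chudnovsky} and is not proved here, so your sketch must stand on its own as a proof of their theorem --- and it does not. The central gap is the ``extension type'' step. A new vertex added in a prime extension relates to the \emph{entire} accumulated graph, and the six target families are defined precisely by such global adjacencies: in the half-graph of height $n$, each new vertex $b_i$ is adjacent to \emph{all} of $a_1,\dots,a_i$, which no bounded-window type can record. So constancy of your type along a subchain extracted by pigeonhole does not force the subchain to reassemble into one of the six families; conversely, letting the type see the whole history makes the type space grow like $2^{|V(G_i)|}$ and the pigeonhole collapses. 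You name this balance as ``the main obstacle,'' but it is not a bookkeeping issue to be deferred --- it \emph{is} the theorem. A second structural problem is that iterating Theorem \ref{theo:indec} uses primality only through the existence of a two-vertex-smaller prime induced subgraph and then discards it; a tower $G_0<G_1<\cdots<G_m$ of prime graphs carries no information about how the increments attach. The actual proof in \cite{chudnovsky} uses primality pointwise --- for every candidate nontrivial module there exists a distinguishing vertex --- to greedily grow the ``chains'' of item (5) (these are exactly the graphs $G_\mu$ of the present paper), and the six outcomes then emerge from a Ramsey-assisted case analysis of how the neighborhoods of a long chain interact with the rest of the graph, where the degenerate cases (many vertices with identical outside-neighborhoods) are killed because they would produce a nontrivial module.

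Your alternative compactness route also has a concrete gap: primality is not a first-order property (a module is an arbitrary subset of the domain), so neither compactness nor ultraproducts convert ``arbitrarily large finite prime graphs avoiding the patterns'' into an infinite prime graph avoiding them; Theorem \ref{ille-theorem} points in the opposite direction (it produces prime finite restrictions \emph{inside} a given prime structure, not an infinite prime limit of finite prime structures). Moreover, even granting such an infinite counterexample, you would need a classification of infinite prime graphs to derive the contradiction, and that is available only in special cases --- e.g., \cite{pouzet-zaguia2009} handles prime graphs with no infinite clique, which is far from the general situation the quantifier ``for all $n$, there exists $N$'' requires.
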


It turns out that we can represent a chain by a word on the alphabet $\{0,1\}$.

In this text, a $0$-$1$ sequence is a map $\mu$  from an interval $I$ of the set $\ZZ$ of integers into $\{0,1\}$. The restriction of $\mu$ to an interval of $I$ is a \emph{factor} of $\mu$. For $i\in I$ we will sometimes denote $\mu(i)$ by $\mu_i$.  If $I$ is finite, with $n$ elements, we may view $\mu$  as a word  $\mu:=  u_0\ldots   u_{n-1}$. If the sequence is infinite we may view it as a $0$-$1$ sequence over $\NN$, over $\NN^{*}:= \{\ldots, -n, \ldots, -2, -1, 0\}$, or over $\ZZ$.

\begin{definition}To $\mu$ we associate the graph $G_{\mu}$ whose vertex set $V(G_\mu)$ is $\{-1,  0, \ldots,  n-1\}$  if  the domain of $\mu$ is $\{0, \ldots,  n-1\}$, $\{-1\} \cup \NN$ if the domain of $\mu$  is $\NN$, and $\NN^{*}$  or $\ZZ$ if the domain of $\mu$ is $\NN^{*}$  or $\ZZ$. For two vertices $i,j$ with $i<j$ we let $\{i,j\}$ be an edge of $G_{\mu}$ if and only if
\begin{align*}
  \mu_j=1 & \mbox{ and } j=i+1,\mbox{or} \\
  \mu_j=0 &\mbox{ and } j\neq i+1.
\end{align*}
\end{definition}


For instance, if $\mu$ is the word defined on $\NN$ by setting $\mu_i=1$ for all $i\in \NN$, then  $G_\mu$ is the infinite one way path on $\{-1\}\cup \NN$. Note that if $\mu'$ is the word defined on $\NN$ by setting $\mu'_i=1$ for all $i\in \NN \setminus \{1\}$ and $\mu'_1=0$, then  $G_{\mu'}$ is also the infinite one way path. In particular the graphs $G_{\mu}$ and $G_{\mu'}$ have the same age but $\mu$ and $\mu'$ do not have the same sets of finite factors.

\begin{figure}[h]
\begin{center}
\leavevmode \epsfxsize=5in \epsfbox{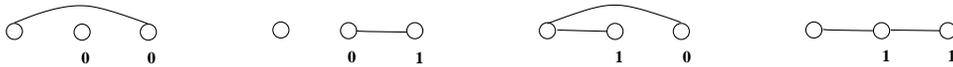}
\end{center}
\caption{$0$-$1$ words of length two and their corresponding graphs.} \label{fig:gmu-three}
\end{figure}

\begin{figure}[h]
\begin{center}
\leavevmode \epsfxsize=4in \epsfbox{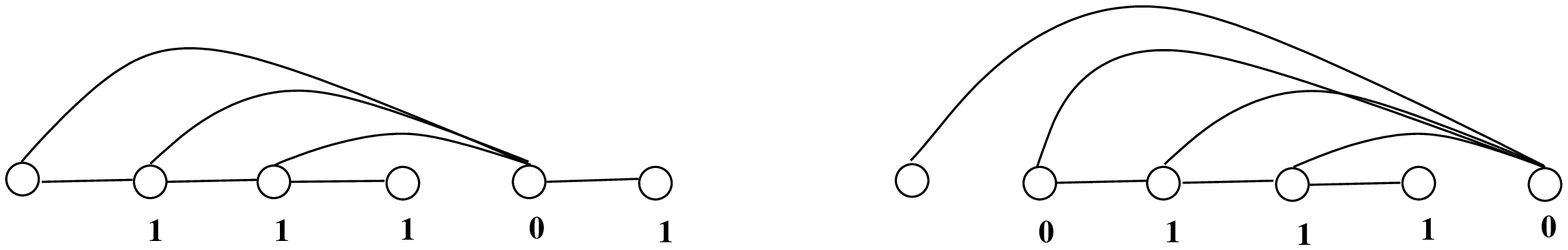}
\end{center}
\caption{Two distinct $0$-$1$ sequences with isomorphic corresponding graphs.} \label{fig:gmu-same-graphs}
\end{figure}

This correspondence between $0$-$1$ words and graphs was first considered in \cite{sobranithesis}, \cite{sobranietat}; see also  \cite{zverovich} and \cite{chudnovsky}.
\begin{remark}\label{lem:comp}
If $I$ is an interval of $\NN$ and $\mu:=(\mu_{i})_{i\in I}$ is a $0$-$1$ sequence, then $\overline{G_\mu}=G_{\overline{\mu}}$, where $\overline{\mu}:=(\overline{\mu}_{i})_{i\in I}$ is the $0$-$1$ sequence defined by $\overline{\mu}(i):=\mu(i)\dot +1$ and $\dot +$ is the addition modulo $2$.
\end{remark}

\begin{figure}[h]
\begin{center}
\leavevmode \epsfxsize=3in \epsfbox{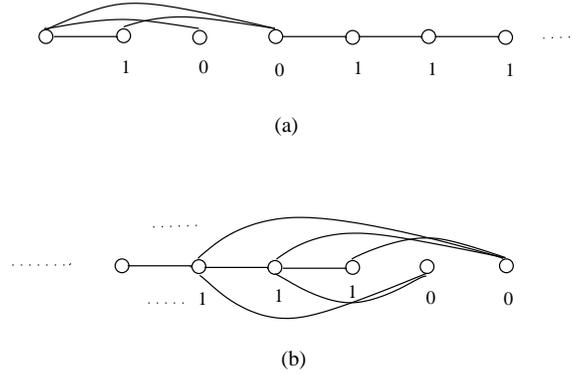}
\end{center}
\caption{$0$-$1$ graphs nonrealizable by a sequence on $\ZZ$.} \label{fig:gmu-nonrealizable}
\end{figure}

\begin{remark} Given a $0$-$1$ graph defined on $\NN\cup \{-1\}$ or on $\NN^*$ there does not exist necessarily a $0$-$1$ graph on $\ZZ$ with the same age.\\
\emph{Indeed,
$(a)$ Let $\mu:=100111\ldots$ be an infinite word on $\NN$ (the corresponding graph is depicted in (a) of  Figure $\ref{fig:gmu-nonrealizable}$). There does not exist a word $\mu'$ on $\NN^*$ or $\ZZ$ such that $\age(G_\mu)=\age(G_{\mu'})$. \\
$(b)$ Let $\nu:=\ldots 11100$ be an infinite word on $\NN^*$ (the corresponding graph is depicted in (b) of  Figure $\ref{fig:gmu-nonrealizable}$). There does not exist a word $\nu'$ on $\NN$ or $\ZZ$ such that $\age(G_\nu)=\age(G_{\nu'})$.\\
\emph{Proof of (a):} Every vertex of the graph $G_\mu$ has finite degree. Suppose for a contradiction that there exists a word $\mu'$ on $\NN^*$ or $\ZZ$ such that $\age(G_\mu)=\age(G_{\mu'})$. Then there exists $i \in \ZZ$ such that $\mu'(i)=0$ because otherwise $G_{\mu'}$ would be a path and hence $\age(G_\mu)\neq \age(G_{\mu'})$. But then the vertex $i$ of $G_{\mu'}$ would have infinite degree which is impossible since every vertex of the graph $G_\mu$ has finite degree. $\hfill$ $\blacksquare$\\
\emph{Proof of (b):} The graph $G_\nu$ has two vertices of infinite degree. Suppose for a contradiction that there exists a word $\nu'$ on $\NN$ or $\ZZ$ such that $\age(G_\nu)=\age(G_{\nu'})$. Then $\nu'$ must take the value $0$ on an infinite subset of $I$ of $\ZZ$ because otherwise every vertex of $G_{\nu'}$ would have finite degree which is impossible since $\age(G_\nu)=\age(G_{\nu'})$. Let $I'\subseteq I$ be an infinite set of nonconsecutive integers. Then $G_{\nu'}$ induces an infinite clique on $I'$. This is not possible since the only cliques of $G_\nu$ have cardinality $3$}.   $\hfill$ $\blacksquare$
\end{remark}

\begin{remark} Given a word $\nu$ we associate the graph $G{\nu}$ whose vertex set $V(G^\nu)$ is $\{-n+1, \ldots,  0,1\}$  if  the domain of $\nu$ is $\{-n+1, \ldots,  0\}$, $\NN$ or $\ZZ$ if the domain of $\nu$  is $\NN$ or $\ZZ$ respectively, and $\NN^{*}\cup \{1\}$   if the domain of $\nu$ is $\NN^{*}$. For two vertices $i,j$ with $i<j$ we let $\{i,j\}$ be an edge of $G^{\nu}$ if and only if
\begin{align*}
  \nu_i=1 & \mbox{ and } j=i+1,\mbox{or} \\
  \nu_i=0 &\mbox{ and } j\neq i+1.
\end{align*}
If $\nu$ is of domain $\{0, \ldots,  n-1\}$, $\NN$, $\NN^*$ or $\ZZ$ define $\nu^*$ to be the sequence of domain is $\{-n+1, \ldots,  0\}$, $\NN^*$, $\NN$ or $\ZZ$ respectively by setting $\nu^*(i):=\nu(-i)$. Then ${G}^{\nu^*}$ and $G_\nu$ are isomorphic.
\end{remark}

A graph $G:= (V, E)$ is a \emph{permutation graph} if there is a total order $\leq $ on $V$ and a permutation $\sigma$ of $V$ such that the edges of $G$ are the pairs  $\{x, y\}\in [V]^2$ which are reversed by $\sigma$. A graph is  the  comparability graph of a two-dimensional poset if and only if it is also the  incomparability graph of a two-dimensional poset \cite{dushnik-miller}. If the graph is finite,  this amounts to the fact that this is a permutation graph.

During the last fifteen years, several  studies have been devoted to permutation graphs and some variants, in relation with the Stanley-Wilf conjecture and its solution by Marcus and Tard\"os \cite{marcus-tardos}. An emphasis was put on hereditary classes of finite permutation graphs and a classification of these classes, notably in terms of their profile. The role of the notions of primality and of well-quasi-order has been particularly investigated,  see \cite {klazar, vatter}.

\begin{theorem}\label{thm:permutation-graph}For every $0$-$1$ word $\mu$ the age  $\age(G_\mu)$ consists of permutation graphs.
\end{theorem}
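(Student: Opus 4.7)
The plan is to realise $G_\mu$ as a permutation graph directly, by producing two linear orders $<_1$ and $<_2$ on $V(G_\mu)$ whose set of disagreeing pairs (i.e., pairs $\{u,v\}$ with $u<_1 v$ but $v<_2 u$) coincides with the edge set of $G_\mu$. Every finite induced subgraph of $G_\mu$ then inherits, via restriction, such a pair of linear orders, and the restriction exhibits it as a permutation graph in the sense of the paper: take $<_1$ as the total order and encode $<_2$ as a permutation relative to it, so that the inversions are exactly the edges. Moreover, any finite subset of $V(G_\mu)$ lies in the vertex set of some finite prefix of $G_\mu$, and the adjacencies on such a subset depend only on that prefix, so it suffices to treat the case of finite $\mu$.

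For finite $\mu$, I would build $<_1$ and $<_2$ by induction on $|\mu|$, inserting $v_0,v_1,\ldots$ one at a time. At step $j$ the adjacency pattern required of $v_j$ is highly constrained: if $\mu_j=1$ then $v_j$ must disagree with $v_{j-1}$ only and agree with every earlier $v_i$, while if $\mu_j=0$ then $v_j$ must agree with $v_{j-1}$ only and disagree with every earlier $v_i$. Writing the current position of $v_i$ in the two orders as a pair $(a_i,b_i)\in\{0,\ldots,j\}^2$, inserting $v_j$ at a position $(p_1,p_2)$ translates these constraints into XOR conditions $(p_1>a_i)\oplus(p_2>b_i)=\varepsilon_i$, with $\varepsilon_i\in\{0,1\}$ the edge/non-edge indicator of $\{v_i,v_j\}$, and the task is to show that some $(p_1,p_2)$ solves the whole system. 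This is the main technical obstacle, and I expect to handle it by maintaining an invariant describing the ``shape'' of the placed configuration---informally, that $v_{j-1}$ sits at a designated corner of the current $2$-dimensional point set and that the full configuration has a staircase structure compatible with extension for either value of $\mu_{j+1}$. Verifying this invariant case by case on $\mu_j\in\{0,1\}$ then closes the induction.

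If pinning down this explicit construction proves cumbersome, an alternative route is to invoke the Dushnik--Miller characterisation that a finite graph is a permutation graph iff it and its complement are both comparability graphs. By the Remark preceding the theorem, $\overline{G_\mu}=G_{\overline{\mu}}$, so by symmetry it suffices to exhibit a transitive orientation of $G_\mu$ for every word $\mu$. One would then write down an orientation rule directly in terms of $\mu$---for instance, orienting each edge $\{v_i,v_j\}$ with $i<j$ in a direction depending on $\mu_j$ and on the block structure of $\mu$ around position $j$---and verify transitivity by examining the possible chain patterns $a\to b\to c$ case by case against the defining adjacency rule of $G_\mu$.
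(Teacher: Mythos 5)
Your first route coincides with the paper's own proof in strategy: the paper proves the theorem by reducing to finite words (permutation graphs are closed under induced subgraphs, and every finite induced subgraph of $G_\mu$ sits inside some $G_w$ for a finite factor $w$ of $\mu$) and then running exactly the one-vertex-at-a-time induction you describe, via Lemma~\ref{lem:one-extension}: a realizer $(L_{w'},M_{w'})$ of a transitive orientation of $G_{w'}$ is extended to one of $G_w$. But the step you explicitly defer --- the solvability of your XOR insertion system, which you ``expect to handle by maintaining an invariant'' --- is the entire mathematical content of that lemma. As written, the proposal identifies the right scheme but does not prove the theorem; everything hinges on exhibiting an invariant that actually closes, and that is not done.

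Moreover, the invariant you float is not the one that works, and part of it is likely false. The paper's invariant is weaker and sharper: the most recently added vertex is extremal (maximal or minimal) in \emph{at least one} of the two linear orders of the realizer --- no condition on the rest of the configuration. In the paper's case $w_{n-1}=1$, the new vertex $n-1$ is inserted immediately below $n-2$ in $L_w$ (so not extremal there) and at the bottom of $M_w$: extremality in one coordinate only, and the earlier points form whatever configuration they form. Your stronger informal requirement that ``the full configuration has a staircase structure'' cannot be maintained: already when $\mu$ is constant equal to $1$, $G_w$ is a path, whose two-order representations are zigzags, not staircases. Two further ingredients you would need to make the case analysis close, and which the paper builds into the induction hypothesis, are the normalizations under dualizing the realizer, $(L,M)\mapsto(L^*,M^*)$, and under swapping $L$ and $M$, so that one may assume without loss of generality that the previous vertex is maximal in $L_{w'}$. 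Your fallback via Dushnik--Miller and $\overline{G_\mu}=G_{\overline{\mu}}$ is viable in principle --- indeed the paper's lemma constructs the transitive orientation $P_w$ and its $2$-realizer simultaneously --- but it again leaves the orientation rule and its transitivity check unspecified, so it carries the same gap rather than repairing it.
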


Theorem \ref{thm:permutation-graph} follows from the  Compactness Theorem of First Order Logic and the following Lemma.

Let $P: (V, \leq)$ be a poset. An element $x\in V$ is \emph{extremal} if it is maximal or minimal.

\begin{lemma}\label{lem:one-extension}Let $w:=w_0\ldots w_{n-1}$ be a finite word with $n\geq 2$ and $w':=w_0\ldots w_{n-2}$. Then every realizer $(L_{w'}, M_{w'})$ of a transitive orientation of $G_{w'}$ on $\{-1, 0, \dots, n-2\}$ (if any) such that $n-2$ is extremal in $L_{w'}$ or in $M_{w'}$ extends to a realizer $(L_{w}, M_{w})$ of a transitive orientation of $G_{w}$ on $\{-1, 0, \dots, n-1\}$   such that $n-1$ is extremal in $L_{w}$ or in $M_{w}$.
\end{lemma}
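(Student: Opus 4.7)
The key observation is that the vertex $n-1$ has a very restricted neighborhood in $G_w$: by the definition of $G_\mu$, if $w_{n-1}=1$ then $n-1$ is adjacent only to $n-2$, and if $w_{n-1}=0$ then $n-1$ is adjacent to every $i\in\{-1,\ldots,n-3\}$ and to nothing else. Either way, the new neighborhood is determined entirely by whether the other vertex equals $n-2$, which suggests placing $n-1$ next to $n-2$ in one linear extension and at a boundary of the other.

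By the obvious symmetries (swap the roles of $L$ and $M$, or reverse both orders simultaneously), the plan is to reduce to the case where $n-2$ is the maximum of $L_{w'}$; the three other sub-cases are treated in an entirely analogous way. Under this assumption $n-2$ is in particular a maximal element of the poset $P'$ realized by $(L_{w'},M_{w'})$. I then define $L_w$ to be $L_{w'}$ with $n-1$ inserted in the position immediately below $n-2$, so that $n-2$ remains the maximum of $L_w$ and $n-1$ is its immediate predecessor. I define $M_w$ according to the value of $w_{n-1}$: if $w_{n-1}=1$, place $n-1$ at the bottom of $M_w$; if $w_{n-1}=0$, place $n-1$ at the top of $M_w$. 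In both sub-cases $n-1$ is extremal in $M_w$, which delivers the extremality conclusion of the lemma.

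The remaining work is to verify that $(L_w,M_w)$ realizes a transitive orientation of $G_w$. A direct inspection of $L_w\cap M_w$ on the pair $(n-1,n-2)$ and on each pair $(n-1,i)$ with $i\in\{-1,\ldots,n-3\}$ reproduces exactly the adjacencies prescribed by $G_w$: by construction $n-1$ and $n-2$ are ordered the same way in $L_w$ and $M_w$ iff $w_{n-1}=1$, and $n-1$ and $i$ are ordered the same way in $L_w$ and $M_w$ iff $w_{n-1}=0$. Transitivity is then immediate: the only new comparabilities added to $P'$ are either the single relation $n-1<n-2$ (when $w_{n-1}=1$) or the family $\{i<n-1: i\in\{-1,\ldots,n-3\}\}$ (when $w_{n-1}=0$), and the maximality of $n-2$ in $P'$ forbids any chain inside $P'$ that could close up with these new relations to produce an unintended comparability between $n-1$ and some element of $P'$.

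The main obstacle is exactly this transitivity check. Without an extremality hypothesis on $n-2$, inserting $n-1$ at a boundary of $M_w$ would typically create spurious comparabilities between $n-1$ and non-neighbors via transitive closure, so the extended pair would fail to realize the prescribed transitive orientation of $G_w$. The hypothesis that $n-2$ is extremal in one of $L_{w'},M_{w'}$ is precisely what neutralizes this risk, and it is used in an essential way so that the construction can be iterated to propagate the realizer as the word is extended letter by letter (which is what is needed for the Compactness argument leading to Theorem~\ref{thm:permutation-graph}).
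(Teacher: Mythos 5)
Your proposal is correct and follows essentially the same route as the paper's own proof: the same symmetry reduction (swap $L_{w'},M_{w'}$ or dualize both) to the case where $n-2$ is the maximum of $L_{w'}$, the same insertion of $n-1$ immediately below $n-2$ in $L_w$, the same bottom/top placement of $n-1$ in $M_w$ according to $w_{n-1}=1$ or $w_{n-1}=0$, and the same use of the maximality of $n-2$ in the realized poset to rule out spurious comparabilities. As a side benefit, your explicit case analysis corrects a small slip in the paper's $w_{n-1}=0$ case, where $M_w$ should be obtained by placing $n-1$ (not $n-2$) above all elements of $M_{w'}$.
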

\begin{proof}
Let    $(L_{w'}, M_{w'})$ be a realizer of  a transitive orientation $P_{w'}$ of $G_{w'}$ on $\{-1, 0, \dots, n-2\}$ such that $n-2$ is extremal in $L_{w'}$ or in $M_{w'}$.
 We may assume without loss of generality that $n-2$ is maximal in $L_{w'}$ or in  $M_{w'}$. Otherwise, consider $P^*_{w'}$ and the pair $(L^*_{w'}, M^*_{w'})$. Note that $P^*_{w'}$ is a transitive orientation of $G_{w'}$,  the pair  $(L^*_{w'}, M^*_{w'})$  is a realizer of $P^*_{w'}$,     and $n-2$ is maximal in  $L^*_{w'}$ or in $M^*_{w'}$ (this is because  $n-2$ is minimal in $L_{w'}$ or in $M_{w'}$). We then extend   $(L^*_{w'}, M^*_{w'})$  to a realizer of  $P^*_{w'}$  with the desired property. The dual of this realizer is a realizer of $P_{w}$ with the required property. We may also suppose that $n-2$ is maximal in $L_{w'}$, because  otherwise, we interchange the roles of  $L_{w'}$ and $M_{w'}$. \\
$\bullet$ If $w_{n-1}=1$, then $\{n-2,n-1\}$ is the unique edge of $G_{w}$ containing $n-1$. Clearly $P_w:=P_{w'}\cup \{(n-1,n-2)\}$ is a transitive orientation of $G_{w}$. Let $L_w$ be the total order obtained from $L_{w'}$ so that $n-1$ appears immediately before $\max(L_{w'})=n-2$ and larger than all other elements and let $M_w$ be the total order obtained from $M_{w'}$ by letting $n-1$ smaller than all elements of $M_{w'}$. Clearly, $(L_w, M_w)$ is a realizer of  $P_{w}$ and by construction $n-1$ is minimal in $P_{w}$ and in $M_w$.\\
$\bullet$ Else if $w_{n-1}=0$, then $\{n-2,n-1\}$ is the unique non edge of $G_{w}$ containing $n-1$. Since $n-2$ is maximal in $P_{w'}$ we infer that $P_w:=P_{w'}\cup \{(x,n-1)) : x\in \{-1,0,\ldots,n-3\}\}$ is a transitive orientation of $G_{w}$ in which $n-1$ and $n-2$ are incomparable. Let $L_w$ be the total order obtained from $L_{w'}$ so that $n-1$ appears immediately before $\max(L_{w'})=n-2$ and larger than all other elements and let $M_w$ be the total order obtained from $M_{w'}$ by letting $n-2$ larger than all elements of $M_{w'}$. Clearly, $(L_w, M_w)$ is a realizer of  $P_{w}$ (indeed,  $n-2$ and $n-1$ are incomparable in $L_w\cap M_w$ and for all $x\in \{-1,0,\ldots,n-3\}$, $x< n-1$ in $L'_w \cap M'_w$ proving that $\{L_w,M_w\}$ is a realizer of $P_w$). By construction $n-1$ is maximal in $P_w$ and $M^{'}_w$. The proof of the lemma  is now complete.
\end{proof}

It is easy to prove and well known that there are $2^{\aleph_0}$ hereditary classes  of  finite permutation graphs. This is due to the existence of infinite antichains among finite permutation graphs.

In general, it is not true that two words with different sets of finite factors give different  ages. Let us recall two basic notions of Symbolic Dynamic. A  word $u$  is \emph{recurrent} if every finite factor occurs infinitely often; the word
 $u$   is \emph{uniformly recurrent} if for every $n\in \NN$ there exists $m\in \NN$ such that each factor $u(p)\ldots u(p+n)$ of length $n$ occurs as a factor of every factor of length $m$.

\begin{theorem}\label{thm:recurrent-word} Let $\mu$ and $\mu'$ be two words. If $\mu$ is recurrent and   $\age (G_\mu) \subseteq \age(G_{\mu'})$, then $Fac(\mu)\subseteq Fac(\mu')$.
\end{theorem}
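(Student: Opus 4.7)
The plan is to fix a factor $w = \mu_i\cdots\mu_{i+n-1}$ of $\mu$ and show $w$ is a factor of $\mu'$. By recurrence, for every $L\ge 1$ we may pick a factor $W_L = \mu_{p_L}\cdots\mu_{p_L+2L+n-1}$ of $\mu$ with $w$ sitting in its central window $M := \{p_L+L-1,\ldots,p_L+L+n-1\}$, so that $w$ has at least $L$ letters of left and right context inside $W_L$. By hypothesis, $G_{W_L}$ embeds in $G_{\mu'}$ via some $\phi_L$, and the goal is to argue that the image $\phi_L(M)$ is an interval of $V(G_{\mu'})$ along which the letters of $\mu'$ spell $w$.

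The analysis hinges on two purely structural rules valid in every graph $G_\sigma$ arising from a $0$-$1$ word $\sigma$: the \emph{far-edge rule}, that for $a<b$ with $b\ge a+2$, $\{a,b\}$ is an edge of $G_\sigma$ iff $\sigma_b=0$; and the \emph{near-edge rule}, that $\{a,a+1\}$ is an edge iff $\sigma_{a+1}=1$. Together they determine $\sigma$ from $G_\sigma$ as soon as the linear order on vertices is known. The task therefore reduces to showing that, for $L$ sufficiently large, $\phi_L$ restricted to $M$ is order-preserving and sends consecutive vertices to consecutive vertices of $V(G_{\mu'})$.

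Assuming $\mu$ is not constant, order-reversal of $\phi_L$ is killed directly: if $\phi_L(i)>\phi_L(j)$ for all $i<j$ with $j\ge i+2$, then the far-edge rule in both graphs yields $\mu_j=\mu'_{\phi_L(i)}$, forcing $\mu$ to be constant to the right of $i$, contrary to recurrence. Order-preservation on $M$ is more delicate: suppose $\phi_L(u)>\phi_L(v)$ for some $u<v$ in $M$, and let a right-context vertex $v'$ of $W_L$ vary. In $G_{W_L}$ both $\{u,v'\}$ and $\{v,v'\}$ are controlled by the single letter $\mu_{v'}$; in $G_{\mu'}$ their adjacencies are governed by $\mu'$ at whichever of $\phi_L(u),\phi_L(v),\phi_L(v')$ is largest. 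A case analysis shows that any $v'$ with $\phi_L(v')<\phi_L(u)$ must satisfy $\mu_{v'}=\mu'_{\phi_L(u)}$, a fixed value; recurrence guarantees that for large $L$ the right context contains $v'$ with each value of $\mu_{v'}$, and combining these constraints with the analogous ones from the left context rules out the reversal. Once $\phi_L$ is order-preserving on $M$, consecutive-preservation is immediate: if $\phi_L(j+1)>\phi_L(j)+1$ for some $j\in M$, then the near-edge rule in $G_{W_L}$ combined with the far-edge rule in $G_{\mu'}$ yields $\mu'_{\phi_L(j+1)}=\overline{\mu_{j+1}}$, whereas the far-edge rule in both graphs applied to $\{j-1,j+1\}$ gives $\mu'_{\phi_L(j+1)}=\mu_{j+1}$, a contradiction. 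Hence $\phi_L$ acts as a translation on $M$, its image is an interval of $V(G_{\mu'})$, and the near-edge rule reads $\mu'_{\phi_L(j)}=\mu_j$ along this interval, so that $w\in Fac(\mu')$.

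The main obstacle is the order-preservation step, because the far-edge rule controls only one endpoint per pair, so the contradictions cannot be extracted from any single pair and must be engineered by ranging over many reference vertices simultaneously, with the recurrence of $\mu$ providing the required diversity of letters. The degenerate cases where $\mu$ is constant ($\mu=0^\infty$ or $\mu=1^\infty$) are handled separately by direct inspection, using that $Fac(\mu)$ consists only of powers of a single letter and that $G_{1^k}$ is the path on $k+1$ vertices (and $G_{0^k}$ its complement).
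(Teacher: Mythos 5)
Your overall route --- realize $w$ at the center of factors $W_L$ of $\mu$ with arbitrarily long two-sided context, embed $G_{W_L}$ into $G_{\mu'}$, and force the embedding $\phi_L$ to act as a translation on the central window via the near/far edge rules --- is the natural one, and several of your computations check out: a globally order-reversing embedding of a window containing both letters far from its ends is indeed impossible (the far-edge rule makes all far pairs report the single letter $\mu'_{\phi_L(i)}$, forcing near-constancy); once order-preservation on $M$ is granted, your consecutive-preservation argument via the pair $\{j-1,j+1\}$ is sound; and the final reading of $w$ along the image interval is correct. (Note that the paper is an extended abstract and states this theorem without proof, so your attempt can only be judged on its own merits.)

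The decisive step, however --- order-preservation of $\phi_L$ on $M$ for large $L$ --- has a genuine gap, and you have in effect flagged it yourself. The constraint you extract, that a right-context vertex $v'$ with $\phi_L(v')<\phi_L(u)-1$ must satisfy $\mu_{v'}=\mu'_{\phi_L(u)}$, is correct but \emph{conditional}: it binds only those context vertices that happen to land below $\phi_L(u)$, and nothing you prove forces any context vertex, let alone ones carrying both letters, to land there. If the embedding places every right-context vertex above $\phi_L(u)+1$, all your far-edge equations degenerate into the tautological labelling $\mu'_{\phi_L(v')}=\mu_{v'}$, and the single inversion $\phi_L(u)>\phi_L(v)$ survives every constraint you state; the ``analogous constraints from the left context'' have the same conditional character (they pin values of $\mu'$ at image positions, not letters of $\mu$ at context positions). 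Ruling out local inversions in the middle of the window requires an actual mechanism --- for instance, exploiting the in-window vertices lying between $u$ and $v$, whose adjacencies to both $u$ and $v$ are dictated by single letters, combined with a pigeonhole over the many inverted pairs an inversion propagates --- and none of this is carried out; ``a case analysis shows'' is doing all the work at exactly the point where the path example $G_{111\cdots}\cong G_{1011\cdots}$ shows that scrambling is genuinely possible near boundaries. A secondary slip: as literally stated, recurrence does not guarantee that the centered window $W_L$ contains both letters on each side of $w$ (a recurrent word may have arbitrarily long constant runs), so your diversity and anti-reversal steps do not apply to an arbitrary choice of $W_L$; this is fixable --- in a nonconstant recurrent word both letters occur infinitely often, so the occurrence of $w$ and the window can be chosen to supply as many context vertices of each letter on each side as needed --- but the choice must be made explicitly, since the argument depends on it.
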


%
%

Using this result and the fact that  there are $2^{\aleph_0}$ $0$-$1$ recurrent words with distinct sets of factors, we obtain the following.

\begin{theorem}\label{thm:cont-ages} There are $2^{\aleph_0}$ ages of permutation graphs.
\end{theorem}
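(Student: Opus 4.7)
The strategy is a direct application of the two preceding theorems combined with a standard antichain construction in symbolic dynamics. The upper bound is automatic, since hereditary classes of finite permutation graphs are subclasses of the countable class of finite graphs up to isomorphism, yielding at most $2^{\aleph_0}$ such ages. So the entire content is to produce $2^{\aleph_0}$ pairwise distinct ages of permutation graphs.

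First I would invoke the asserted fact that there exist $2^{\aleph_0}$ uniformly recurrent (hence recurrent) $0$-$1$ words whose sets of finite factors are pairwise distinct. A concrete family can be obtained, for instance, from Sturmian words indexed by the irrational slopes in $(0,1)$, since distinct irrationals give distinct factor sets and each Sturmian word is uniformly recurrent; but any reference for the existence of such a family is enough. Call this family $\{\mu_\alpha : \alpha < 2^{\aleph_0}\}$ and form the associated graphs $G_{\mu_\alpha}$.

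Next I would observe that by Theorem~\ref{thm:permutation-graph}, each $\age(G_{\mu_\alpha})$ consists exclusively of permutation graphs, so it suffices to show that the map $\alpha \mapsto \age(G_{\mu_\alpha})$ is injective. Suppose $\alpha \neq \beta$. By construction $Fac(\mu_\alpha) \neq Fac(\mu_\beta)$, so without loss of generality there is some finite factor appearing in $\mu_\alpha$ but not in $\mu_\beta$; i.e.\ $Fac(\mu_\alpha) \not\subseteq Fac(\mu_\beta)$. Applying the contrapositive of Theorem~\ref{thm:recurrent-word} to the recurrent word $\mu_\alpha$ gives $\age(G_{\mu_\alpha}) \not\subseteq \age(G_{\mu_\beta})$, whence in particular $\age(G_{\mu_\alpha}) \neq \age(G_{\mu_\beta})$. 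This produces $2^{\aleph_0}$ distinct ages of permutation graphs.

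There is no real obstacle here beyond having the supporting family of recurrent words with distinct factor sets; once that is in hand, the argument is a single application of Theorem~\ref{thm:recurrent-word}. The only subtle point worth double-checking when writing out the details is that the contrapositive of Theorem~\ref{thm:recurrent-word} is applied in the correct direction (it requires the word whose factor set is \emph{not} contained in the other to be the recurrent one), which is automatic here since every $\mu_\alpha$ is recurrent.
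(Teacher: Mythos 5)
Your proposal is correct and follows essentially the same route as the paper, which derives the result in one line from Theorem~\ref{thm:recurrent-word} together with the existence of $2^{\aleph_0}$ recurrent $0$-$1$ words with distinct factor sets (e.g.\ Sturmian words) and Theorem~\ref{thm:permutation-graph}. Your care about the direction of the contrapositive is well placed, and your observation that recurrence of \emph{both} words makes the ``without loss of generality'' step harmless is exactly the right check.
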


The ages we obtain in Theorem \ref{thm:cont-ages} are not necessarily well-quasi-ordered. To obtain well-quasi-ordered ages,  we consider graphs associated to uniformly recurrent sequences.

\begin{theorem}\label{thm:uniformly-ages}Let $\mu$ be a $0$-$1$ sequence on an infinite interval of $\ZZ$. The following propositions are equivalent.
\begin{enumerate}[$(i)$]
  \item $\mu$ is uniformly recurrent.
  \item $\mu$ is recurrent and $\age(G_\mu)$ is minimal prime.
\end{enumerate}
\end{theorem}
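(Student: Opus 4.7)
The plan is to prove both directions using Theorem~\ref{thm:recurrent-word}, the symbolic-dynamics fact (already noted in the introduction) that $\mu$ is uniformly recurrent iff $\fac(\mu)$ ordered by the factor order is a minimal poset, and a module-theoretic analysis of finite induced subgraphs of $G_\mu$. The key technical step is the following \emph{block lemma}. For a finite set $F\subseteq V(G_\mu)$, decompose $F=R_1\sqcup\cdots\sqcup R_r$ into its maximal runs of integers consecutive in the domain of $\mu$. A direct check of the edge rule shows that $R_1$ is always a module of $(G_\mu)_{|F}$: every $j\notin R_1$ is at distance at least $2$ from each element of $R_1$, so $\{i,j\}$ is an edge iff $\mu_j=0$, independently of $i\in R_1$. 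The same calculation shows that $R_1\cup R_2$ is a module when $r\geq 3$. Thus if $(G_\mu)_{|F}$ is prime, either $r=1$ (so $F$ is an interval and $(G_\mu)_{|F}\cong G_w$ for a factor $w$ of length $|F|-1$) or $r=2$ with $|R_1|=1$ (so $(G_\mu)_{|F}$ contains $(G_\mu)_{|R_2}\cong G_{w''}$ with $|w''|=|F|-2$ as an induced subgraph). In either case, every prime $K\in\age(G_\mu)$ contains $G_w$ for some factor $w$ of $\mu$ with $|w|\geq |K|-2$.

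For $(i)\Rightarrow(ii)$, assume $\mu$ is uniformly recurrent, so in particular recurrent. A short verification shows that long factors of an infinite uniformly recurrent sequence give prime graphs (immediate in the aperiodic case; the few periodic cases are checked directly), so $\age(G_\mu)$ contains infinitely many primes. Let $\mathcal C'\subsetneq\age(G_\mu)$ be a proper hereditary subclass and pick $H\in\age(G_\mu)\setminus\mathcal C'$, embedded in some $G_{w_0}$. Uniform recurrence supplies $m$ such that $w_0$ is a factor of every length-$m$ factor of $\mu$. By the block lemma, any prime $K\in\age(G_\mu)$ with $|K|\geq m+2$ contains $G_{w_0}$ and hence $H$ as an induced subgraph, so $K\notin\mathcal C'$. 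Consequently $\mathcal C'$ contains only finitely many primes, and $\age(G_\mu)$ is minimal prime.

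For $(ii)\Rightarrow(i)$, assume $\mu$ is recurrent and $\age(G_\mu)$ is minimal prime, and suppose toward a contradiction that $\mu$ is not uniformly recurrent. Standard symbolic dynamics yields a recurrent sequence $\mu'$ with $\fac(\mu')\subsetneq\fac(\mu)$. Using recurrence of $\mu$ together with the block decomposition, every finite induced subgraph of $G_{\mu'}$ can be realized inside $G_\mu$ by placing its constituent $\mu'$-factors in $\mu$ with gaps $\geq 2$; hence $\age(G_{\mu'})\subseteq\age(G_\mu)$. Equality would, by Theorem~\ref{thm:recurrent-word} applied to the recurrent $\mu$, force $\fac(\mu)\subseteq\fac(\mu')$, contradicting strict inclusion. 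So $\age(G_{\mu'})$ is a proper hereditary subclass of the minimal prime age $\age(G_\mu)$ and therefore contains only finitely many primes; but $\mu'$ being infinite and recurrent yields, via the block lemma, arbitrarily large primes $G_{w'}$ in $\age(G_{\mu'})$ — a contradiction. Hence $\mu$ is uniformly recurrent.

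The main obstacle is the block lemma, together with verifying that long factors of an infinite (uniformly) recurrent sequence produce prime graphs. The latter is immediate for aperiodic $\mu$ and requires a separate case analysis for the few periodic and ultimately constant cases. Minor bookkeeping handles the extra vertex $-1$ when the domain of $\mu$ is $\NN$ or $\NN^*$.
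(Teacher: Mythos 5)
The paper itself contains no proof of this theorem (it is an extended abstract; the result is deferred to Chapter~6 of \cite{oudrar}), so your argument must be judged on its own merits. Most of it holds up. Your block lemma is correct and is the right structural observation: for $i<j$ nonconsecutive the edge rule depends only on $\mu_j$, so the leftmost run $R_1$ is always a module of $(G_\mu)_{\restriction F}$, and $R_1\cup R_2$ is one when $r\geq 3$; hence every prime in $\age(G_\mu)$ is, up to at most two vertices, the graph $G_w$ of a factor $w$. The deduction in $(i)\Rightarrow(ii)$ that every proper hereditary subclass contains only primes of bounded size is then sound, and the $(ii)\Rightarrow(i)$ chain is also correct: a minimal subshift yields a uniformly recurrent $\mu'$ with $\fac(\mu')\subsetneq\fac(\mu)$ (strictness is legitimate since uniform recurrence is a property of the factor set alone, as the paper notes via minimality of $\fac(u)$ under the factor order); recurrence of $\mu$ lets you place the letter-blocks of a finite restriction of $G_{\mu'}$ in order with gaps at least $2$, giving $\age(G_{\mu'})\subseteq\age(G_\mu)$ --- and note your block computation is exactly what certifies that this placement reproduces the induced graph, since inter-block adjacency depends only on the letter at the right endpoint; and Theorem~\ref{thm:recurrent-word} correctly rules out equality of the two ages.

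The genuine gap is the ingredient you dismiss as ``a short verification'': that long factors of an infinite uniformly recurrent sequence give prime graphs, ``immediate in the aperiodic case.'' It is not immediate, and as stated it is false for factors per se: for $w=1010$ the graph $G_w$ on $\{-1,0,1,2,3\}$ has the nontrivial module $\{-1,3\}$ (both vertices are adjacent precisely to $0$ and $1$), and alternating factors occur in aperiodic Sturmian words such as the Fibonacci word. So primality of $G_w$ for long factors requires an actual analysis of which module patterns can survive, not a remark --- and this claim is load-bearing: it is the sole source of ``infinitely many primes'' in $(i)\Rightarrow(ii)$, and again for $\age(G_{\mu'})$ in the contradiction concluding $(ii)\Rightarrow(i)$. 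A clean repair using the paper's own toolkit: prove instead that the \emph{infinite} graph $G_\mu$ is prime whenever $\mu$ is recurrent (a module analysis in the infinite graph is easier, because any candidate module is split by a suitably placed far-away vertex, which recurrence always supplies --- compare the failure $\mu=0111\ldots$, which is not recurrent and leaves $-1$ isolated), and then invoke Ille's compactness theorem (Theorem~\ref{ille-theorem}) to extend every finite set to a finite prime restriction, producing arbitrarily large primes in $\age(G_\mu)$, and likewise in $\age(G_{\mu'})$. With that substitution, your proof is complete.
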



As it is well known,  there are $2^{\aleph_0}$ uniformly recurrent words with distinct sets of factors (e.g. Sturmian words with different slopes, see Chapter 6 of  \cite{pytheas}). With Theorem  \ref{thm:recurrent-word} we get:

\begin{theorem}\label{thm:minprimeages} There are $2^{\aleph_0}$ ages of permutation graphs which are minimal prime.
\end{theorem}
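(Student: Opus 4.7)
The plan is to combine the three key ingredients cited immediately before the statement: the existence of continuum many uniformly recurrent $0$-$1$ words with pairwise distinct sets of factors (e.g.\ Sturmian words with different irrational slopes), Theorem \ref{thm:uniformly-ages} which converts uniform recurrence into the minimal prime property of $\age(G_\mu)$, and Theorem \ref{thm:recurrent-word} which lets us recover the set of factors from the age. Theorem \ref{thm:permutation-graph} then takes care of the ``permutation graph'' part at no extra cost.

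Concretely, I would proceed as follows. First, fix a family $(\mu_\alpha)_{\alpha<2^{\aleph_0}}$ of uniformly recurrent $0$-$1$ sequences on $\ZZ$ (or on $\NN$) with pairwise distinct sets $Fac(\mu_\alpha)$ of finite factors; the Sturmian family referenced above provides such a collection. For each $\alpha$, every $\mu_\alpha$ is in particular recurrent, so Theorem \ref{thm:uniformly-ages} gives that $\age(G_{\mu_\alpha})$ is minimal prime, and Theorem \ref{thm:permutation-graph} gives that it consists of permutation graphs. This already furnishes $2^{\aleph_0}$ \emph{candidate} minimal prime ages of permutation graphs, indexed by $\alpha$.

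The main step is to verify that the map $\alpha\mapsto \age(G_{\mu_\alpha})$ is injective. Suppose $\age(G_{\mu_\alpha})=\age(G_{\mu_\beta})$. Since both $\mu_\alpha$ and $\mu_\beta$ are recurrent, Theorem \ref{thm:recurrent-word} applied to the inclusion $\age(G_{\mu_\alpha})\subseteq \age(G_{\mu_\beta})$ yields $Fac(\mu_\alpha)\subseteq Fac(\mu_\beta)$, and applying it symmetrically (using the reverse inclusion together with the recurrence of $\mu_\beta$) yields the opposite containment. Thus $Fac(\mu_\alpha)=Fac(\mu_\beta)$, forcing $\alpha=\beta$ by the choice of the family. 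So the ages $\age(G_{\mu_\alpha})$ are pairwise distinct, giving at least $2^{\aleph_0}$ minimal prime ages of permutation graphs.

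For the reverse estimate, observe that every age is a set of finite graphs considered up to isomorphism, hence a countable subset of a countable set, so there are at most $2^{\aleph_0}$ ages in total. Combining the two bounds gives the desired cardinality. The only delicate point is the direction of Theorem \ref{thm:recurrent-word}: it is stated as a one-way implication, so the argument must invoke it twice using the recurrence of both words; this is the place where one must be careful, but it causes no real difficulty since uniform recurrence implies recurrence on both sides.
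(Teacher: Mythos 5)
Your proposal is correct and follows essentially the same route as the paper, which likewise obtains the theorem by combining the existence of $2^{\aleph_0}$ uniformly recurrent (e.g.\ Sturmian) words with pairwise distinct factor sets, Theorem \ref{thm:uniformly-ages} for minimal primality, Theorem \ref{thm:permutation-graph} for the permutation-graph property, and Theorem \ref{thm:recurrent-word} to distinguish the resulting ages. Your explicit double application of the one-way Theorem \ref{thm:recurrent-word} (using recurrence of both words) is exactly the step the paper leaves implicit, so nothing is missing.
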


Permutation graphs come from posets and  from bichains. Let us recall that a \emph{bichain} is relational structure $R:= (V, (\leq', \leq''))$ made of a set $V$ and two linear orders $\leq'$ and $\leq''$ on $V$. If $V$ is finite and has $n$ elements, there is a unique permutation $\sigma$ of $\{1, \ldots, n\}$ for which $R$ is isomorphic to the  bichain $C_{\sigma}:= (\{1, \ldots, n\}, \leq, \leq_{\sigma})$ where $\leq$ is the natural order on $\underline n:=\{1, \ldots, n\}$ and  $\leq_{\sigma}$ is the linear order defined by $i\leq_{\sigma} j$ if $\sigma(i)\leq \sigma (j)$.


If we represent bichains by permutations, embeddings between bichains is equivalent to the \emph{ pattern   containment}  between the corresponding permutations, see Cameron \cite{cameron}.

To a bichain $R:= (V, (\leq', \leq''))$, we may associate the intersection order $o(R):= (V, \leq'\cap\leq'')$ and to $o(R)$ its comparability graph.

The following is Theorem 67 from \cite{pouzet-zaguia-w.q.o. 22}.

\begin{theorem}\label{thm:minimalprime-graph-poset}
\begin{enumerate}[$(1)$]
\item Let $P:=(V,\leq)$ be a  poset.  Then  $\age(\inc (P))$ is minimal prime if and only if
$\age(\comp(P))$ is minimal prime. Furthermore,  $\age(P)$ is minimal prime if and only if $\age (\ainc(P))$ is minimal prime and $\downarrow  \prim (\age(P))= \age (P)$.
\item  Let $B:=(V,(\leq_1, \leq_2))$ be a bichain and $o(B):= (V, \leq_1\cap \leq_2)$. Then  $\age (B)$ is minimal prime if and only if $\age (o(B))$ is minimal prime and $\downarrow  \prim (\age (B))= \age (B)$.
\end{enumerate}
\end{theorem}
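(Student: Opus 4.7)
The plan is to reduce everything to Theorem~\ref{thm:minimalprime} (a class is minimal prime iff its prime members form a cofinal J\'onsson sub-poset) and to use, as a dictionary between the ``richer'' and ``poorer'' structures, the classical fact that the modular family of a poset $R$ coincides with that of $\comp(R)$ (equivalently, $\inc(R)$). I will take this \emph{modular dictionary} as given: a finite poset is prime iff its comparability graph is prime, and a prime comparability graph has exactly two transitive orientations, dual to each other.

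For part~(1), the first equivalence is immediate: $\inc(P) = \overline{\comp(P)}$ and graph complementation preserves modules, so the involution $G \mapsto \overline{G}$ induces a prime-preserving isomorphism of the quasi-ordered classes $\age(\inc(P))$ and $\age(\comp(P))$, and minimal-primeness is transported. For the second equivalence, set $\mathcal{C} := \age(P)$ and $\mathcal{G} := \age(\inc(P))$ and introduce the surjection $\Phi \colon \mathcal{C} \to \mathcal{G}$, $R \mapsto \inc(R)$, which is well defined because induced sub-posets and induced sub-graphs share vertex sets. By the dictionary, $\Phi$ restricts to a surjection $\prim(\mathcal{C}) \to \prim(\mathcal{G})$ whose fibers have size at most two and are closed under poset-duality. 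For $(\Rightarrow)$, Theorem~\ref{thm:minimalprime} gives cofinality of primes in $\mathcal{C}$ (yielding $\downarrow \prim(\mathcal{C}) = \mathcal{C}$ for free); $\Phi$ pushes cofinality forward to $\mathcal{G}$, and any proper initial segment of $\prim(\mathcal{G})$ lifts via $\Phi^{-1}$ to a proper initial segment of $\prim(\mathcal{C})$, which is finite by J\'onsson-ness, and hence originally finite by the fiber bound; Theorem~\ref{thm:minimalprime} then delivers MP for $\mathcal{G}$. For $(\Leftarrow)$, cofinality in $\mathcal{C}$ is the hypothesis, and the same fiber/initial-segment argument lifts J\'onsson-ness of $\prim(\mathcal{G})$ to $\prim(\mathcal{C})$, giving MP of $\mathcal{C}$.

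For part~(2), I mirror this with $\Psi \colon \age(B) \to \age(o(B))$, $R \mapsto o(R)$. The dictionary is now weaker: $o(R)$ prime does imply $R$ prime (modules of the bichain are modules of the intersection poset), but the converse may fail, and a prime poset can have many bichain realizers, so the $\Psi$-fibers are not bounded a priori. I expect this unbounded-fiber phenomenon to be the \textbf{main obstacle}. The hypothesis $\downarrow \prim(\age(B)) = \age(B)$ is designed to absorb the first asymmetry, and I would handle the second a posteriori: using Theorem~\ref{thm:main1} one obtains w.q.o.\ of $\age(o(B))$ from its MP property, and then pulls w.q.o.\ back to $\age(B)$ via the assumed cofinality of its primes; an infinite $\Psi$-fiber would then produce an infinite antichain of bichain realizers of a single prime poset, contradicting w.q.o. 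Once fibers are shown finite, the cofinality-transport and J\'onsson-transport arguments from part~(1) run verbatim, completing the proof.
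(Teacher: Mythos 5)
The paper itself gives no proof of this statement (it imports it as Theorem 67 of \cite{pouzet-zaguia-w.q.o. 22}), so your proposal can only be judged against the intended argument, and there its skeleton is right: reduce to Theorem \ref{thm:minimalprime} and transport J\'onsson-ness and cofinality along the forgetful maps $R\mapsto \inc(R)$ and $R\mapsto o(R)$, which carry primes onto primes with small fibers. But your backward directions have a genuine hole. The maps collapse a prime poset with its dual (and a prime bichain with its swap $(V,(\leq_2,\leq_1))$), so when you push a proper initial segment $\mathcal J$ of $\prim(\age(P))$ forward, its image in $\prim(\age(\inc(P)))$ can be \emph{everything}: a prime $R_0\notin\mathcal J$ contributes nothing new if $R_0^*\in\mathcal J$. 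Your phrase ``the same fiber/initial-segment argument'' silently assumes the image is proper, and your claim that the fibers are ``closed under poset-duality'' is false in general, since $\age(P)$ need not contain the dual of each of its members. The repair is real but absent from your sketch: because the primes are cofinal in the age, $\prim(\age(P))$ is up-directed, and if every prime outside $\mathcal J$ had its dual inside, a prime upper bound $S$ of $R_0$ and $R_0^*$ would satisfy $S\notin\mathcal J$, hence $S^*\in\mathcal J$ with $S^*\geq R_0$, forcing $R_0\in\mathcal J$, a contradiction. (A side quibble: your ``modular dictionary'' as literally stated is wrong --- the $3$-chain is not prime yet every pair is a module of $K_3$, so the modular families of $P$ and $\comp(P)$ differ --- though the two consequences you actually use, primality transfer and the two dual transitive orientations of a prime comparability graph, are correct classical facts of Gallai.)

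Part (2) has two further problems. First, your ``main obstacle'' is a phantom: the fibers of $\Psi$ over prime posets are bounded by $2$, because a prime two-dimensional poset has a unique realizer up to exchanging the two linear orders (classical, in the Gallai--Trotter circle of results); so the part-(1) mechanics apply directly, subject to the same swap-collapse repair as above. Second, the workaround you propose in its place does not work where you need it: in the $(\Leftarrow)$ direction you only know $\age(o(B))$ is minimal prime, and well-quasi-ordering cannot be ``pulled back'' from $\age(o(B))$ to $\age(B)$ along $\Psi$ --- the richer structures refine the quotient, so comparability of images says nothing about comparability of bichains, and an infinite antichain of realizers of a single poset would contradict w.q.o.\ of $\age(B)$ only, which is precisely what you are trying to establish (via Theorem \ref{thm:main1} from its minimal primality); the argument is circular there. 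In the $(\Rightarrow)$ direction your detour is legitimate, since minimal primality of $\age(B)$ yields w.q.o.\ by Theorem \ref{thm:main1} and same-size members of a fiber are pairwise incomparable, but for the theorem as stated you must close both the fiber bound and the properness-of-image gap in $(\Leftarrow)$.
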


%

%

%

With $(ii)$ of Theorem \ref{thm:minimalprime-graph-poset} and Theorem \ref {thm:minprimeages} we  have:

\begin{theorem}\label{thm:minprimeages-bichains}
There are $2^{\aleph_0}$ ages of bichains and permutation orders  which are minimal prime.
\end{theorem}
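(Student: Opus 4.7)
The plan is to bootstrap the $2^{\aleph_0}$ minimal prime ages of permutation graphs produced by Theorem \ref{thm:minprimeages} into $2^{\aleph_0}$ minimal prime ages of bichains and of permutation posets, by means of Theorem \ref{thm:minimalprime-graph-poset}. Concretely, I would fix an uncountable family $\{\mu_t\}_{t\in T}$ of uniformly recurrent $0$-$1$ sequences with pairwise distinct sets of finite factors (e.g.\ Sturmian words of distinct irrational slopes, as cited from \cite{pytheas}); by Theorems \ref{thm:recurrent-word} and \ref{thm:uniformly-ages}, the ages $\age(G_{\mu_t})$ are then pairwise distinct minimal prime ages of permutation graphs.

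For each $t$, since $G_{\mu_t}$ is a permutation graph (Theorem \ref{thm:permutation-graph}), there exists a bichain $B_t=(V,\leq_1^t,\leq_2^t)$ whose intersection poset $P_t:=o(B_t)$ is $2$-dimensional with comparability graph $G_{\mu_t}$. To keep the choice canonical I would take $B_t$ as the compactness limit of the realizers $(L_w,M_w)$ produced by Lemma \ref{lem:one-extension} applied to growing prefixes $w$ of $\mu_t$. Applying Theorem \ref{thm:minimalprime-graph-poset}(2) to $B_t$ then reduces the minimal primality of $\age(B_t)$ to two conditions: (i) $\age(P_t)$ is minimal prime, and (ii) $\downarrow \prim(\age(B_t))=\age(B_t)$. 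By Theorem \ref{thm:minimalprime-graph-poset}(1), condition (i) in turn reduces to the minimal primality of $\age(G_{\mu_t})$ (which is already in hand) together with $\downarrow \prim(\age(P_t))=\age(P_t)$.

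The main obstacle is therefore the cofinality of primes in $\age(P_t)$ and in $\age(B_t)$. I would derive both from uniform recurrence: any finite restriction of $B_t$ corresponds to a finite factor $w$ of $\mu_t$, and uniform recurrence supplies arbitrarily long factors $w'$ containing $w$ with prescribed left and right contexts. Choosing contexts so that the induced graph on $w'$ is prime (the mechanism already underlying Theorem \ref{thm:uniformly-ages}) gives a cofinal sequence of primes in $\age(P_t)$, since primality of a permutation graph coincides with primality of the underlying $2$-dimensional poset; a parallel argument at the bichain level, using that the realizer orders $\leq_i^t$ are themselves reconstructible from longer and longer prefixes of $\mu_t$ via Lemma \ref{lem:one-extension}, yields cofinality of primes in $\age(B_t)$. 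Finally, since the comparability graph of $o(B_t)$ is $G_{\mu_t}$, the age of $B_t$ determines the age of $P_t$, which in turn determines the age of $G_{\mu_t}$; hence distinct graph ages force distinct bichain and poset ages, giving $2^{\aleph_0}$ minimal prime ages in each of the two categories announced by the theorem.
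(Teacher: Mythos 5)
Your proposal is correct and follows essentially the same route as the paper, which obtains the theorem by combining Theorem \ref{thm:minprimeages} (the $2^{\aleph_0}$ minimal prime ages $\age(G_{\mu_t})$ of permutation graphs) with part $(2)$ of Theorem \ref{thm:minimalprime-graph-poset}. The additional work you do --- constructing $B_t$ as a compactness limit of the realizers from Lemma \ref{lem:one-extension}, and verifying the cofinality conditions $\downarrow \prim(\age(B_t))=\age(B_t)$ and $\downarrow \prim(\age(P_t))=\age(P_t)$ via uniform recurrence --- is a legitimate filling-in of details the paper leaves implicit in its one-line citation.
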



We now are able to state our complete characterization of minimal prime graphs. The corresponding characterization of minimal prime posets and bichains will follow from Theorem \ref{thm:minimalprime-graph-poset} and a careful examination of our list of graphs to decide which graphs are comparability graphs.

\begin{figure}[h]
\begin{center}
\leavevmode \epsfxsize=3in \epsfbox{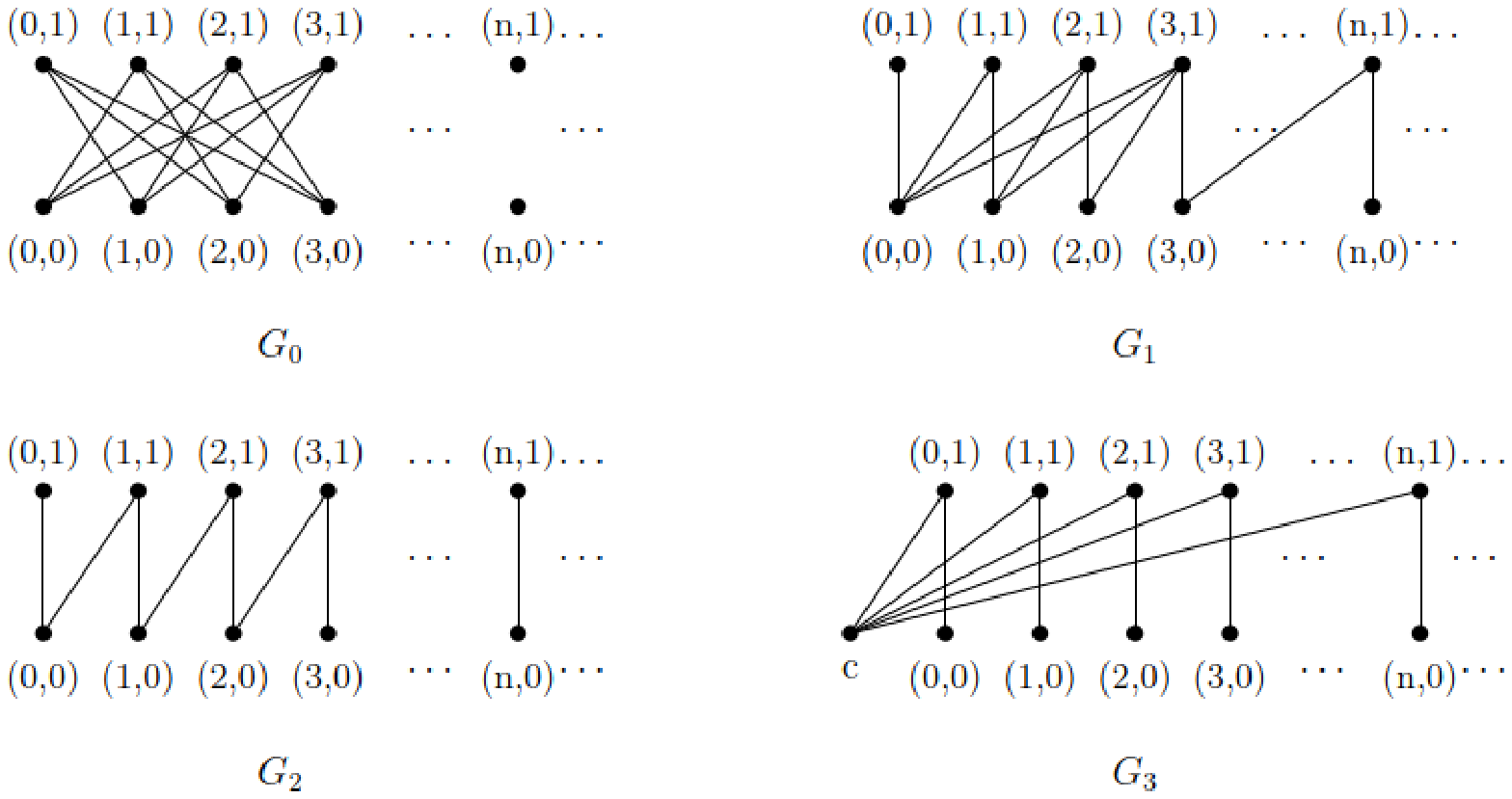}
\end{center}
\caption{} \label{fig:list-graph-min-a}
\end{figure}

\begin{figure}[h]
\begin{center}
\leavevmode \epsfxsize=3in \epsfbox{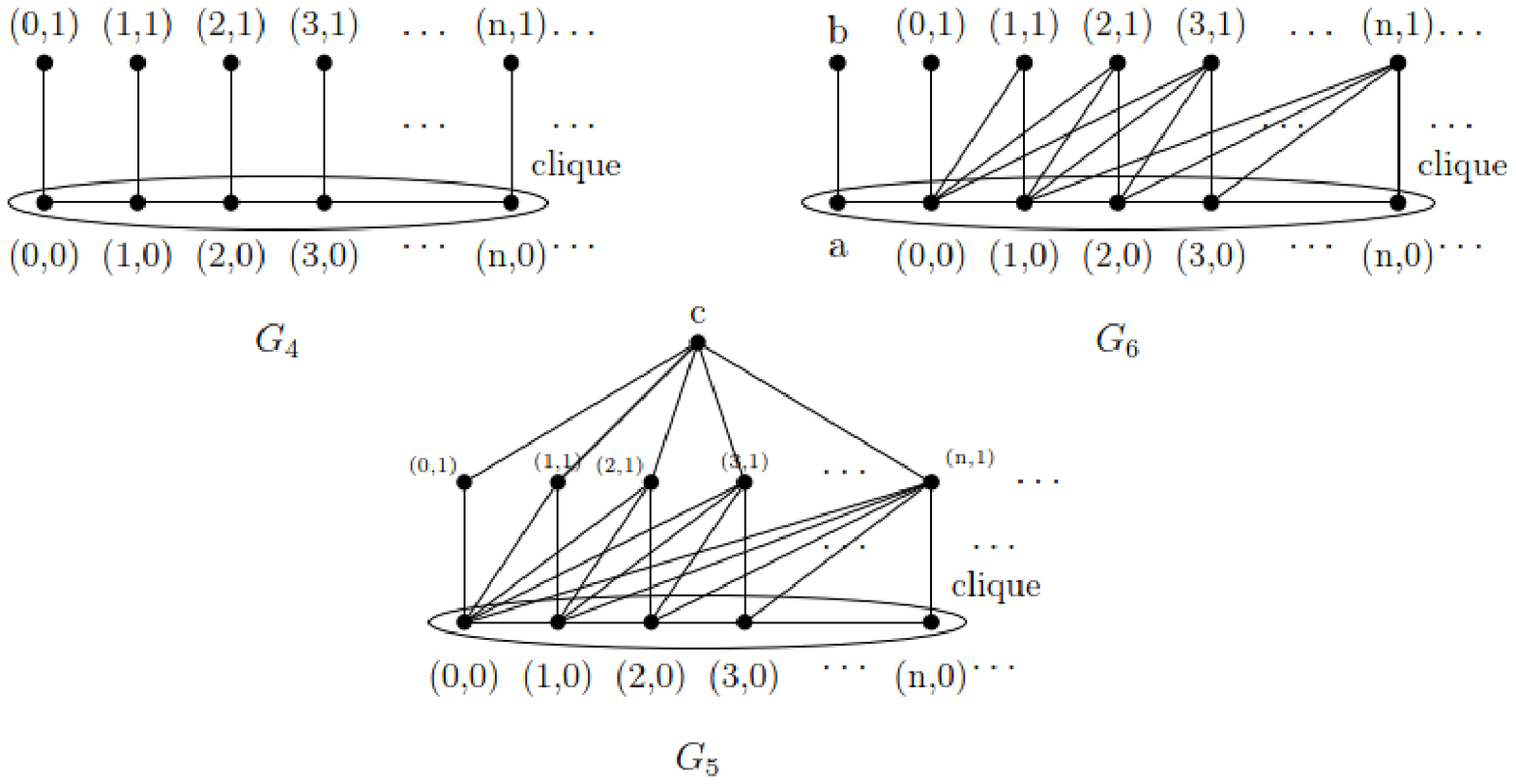}
\end{center}
\caption{} \label{fig:list-graph-min-b}
\end{figure}
\begin{figure}[h]
\begin{center}
\leavevmode \epsfxsize=3in \epsfbox{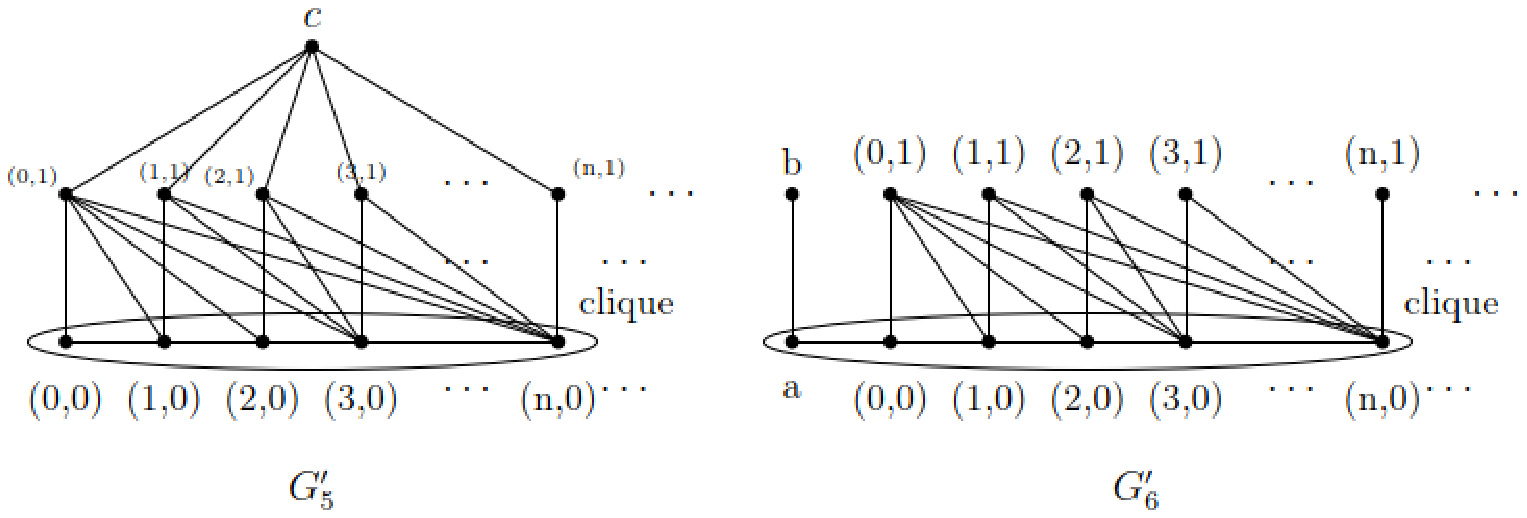}
\end{center}
\caption{} \label{fig:list-graph-min-c}
\end{figure}

These graphs were considered first by the last two authors  in \cite{pouzet-zaguia2009}. It was shown in \cite{pouzet-zaguia2009} that every prime graph with no infinite clique embeds one of the graphs depicted in Figure \ref{fig:list-graph-min-a}.

Let $\mathcal L:= \{\age(G_0), \age(\overline G_0), \age (G_1), \age(\overline G_1), \age(G_3), \age(\overline G_3), \age (G_4), \age(\overline G_4),\break \age(G_5), \age (G_6), \age(\overline G_6)\}$.

The corresponding  graphs are depicted in Figures \ref {fig:list-graph-min-a}, \ref{fig:list-graph-min-b} and \ref {fig:list-graph-min-c}. It should be noted that the graphs $G_5$, $\overline G_5$ and $G'_5$ have the same age. Also, $G_6$,  $G'_6$ have the same age.

\begin{theorem}\label{thm:charact-minimal-prime-ages}
A hereditary class $\mathcal{C}$ of finite graphs is minimal prime if and only if $\mathcal{C}=\age(G_\mu)$ for some uniformly recurrent word on $\NN$, or $\mathcal{C}\in \mathcal {L}$.
\end{theorem}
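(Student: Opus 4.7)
The plan is to prove the two directions separately, with the sufficiency direction being a routine verification and the necessity direction relying on the unavoidability result of Chudnovsky et al.\ (Theorem \ref{thm:chudnovsky}).

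For the sufficiency direction, I would first invoke Theorem \ref{thm:uniformly-ages} to handle the case $\mathcal{C}=\age(G_\mu)$ with $\mu$ uniformly recurrent: that theorem directly gives that the age is minimal prime. For each of the eleven exceptional ages in $\mathcal{L}$, I would verify minimal primality by describing explicitly the poset $\prim(\age(G_i))$ under embeddability and checking that it is J\'onsson and cofinal in $\age(G_i)$; by Theorem \ref{thm:minimalprime} this is enough. The concrete combinatorial structure of the listed $G_i$ (subdivided infinite stars, infinite line graphs, half-graphs, and the two half-graph variants) makes this a finite case-by-case verification.

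For the necessity direction, let $\mathcal{C}$ be minimal prime. By Theorem \ref{thm:main1}, $\mathcal{C}=\age(G)$ for some prime infinite graph $G$ and $\mathcal{C}$ is well-quasi-ordered. The key input is Theorem \ref{thm:chudnovsky}: since $\mathcal{C}$ contains prime graphs of unbounded size, it must contain arbitrarily large members of at least one of the six families listed there (or their complements). I would argue that the presence of arbitrarily large members of such a family forces $\age(G_i)\subseteq \mathcal{C}$ for an appropriate $G_i\in \mathcal{L}$ in the first, second, third, fourth, and sixth cases, and $\age(G_\mu)\subseteq \mathcal{C}$ for some infinite $0$-$1$ word $\mu$ in the fifth (chain) case. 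Since each $\age(G_i)$ is itself minimal prime by the sufficiency direction, and minimal prime ages form an antichain with respect to inclusion, the cofinality of $\prim(\mathcal{C})$ in $\mathcal{C}$ granted by Theorem \ref{thm:minimalprime} forces $\mathcal{C}=\age(G_i)$.

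The main obstacle is the chain case. I would extract from the sequence of arbitrarily long prime chain-induced subgraphs of $G$ a limit $0$-$1$ word $\mu$ on $\NN$ (or on $\ZZ$) by a K\"onig compactness argument on the set of factors arising from these prime chains, arranged so that $\age(G_\mu)\subseteq \mathcal{C}$. Minimality of $\mathcal{C}$ then forces $\age(G_\mu)=\mathcal{C}$; in particular $\age(G_\mu)$ is minimal prime, and Theorem \ref{thm:uniformly-ages} yields that $\mu$ is recurrent, hence (by the equivalence there) uniformly recurrent. A subtle point is the choice of domain for $\mu$: the non-realizability remark illustrated in Figure \ref{fig:gmu-nonrealizable} shows that a sequence on $\NN$ cannot always be replaced by one on $\ZZ$, but uniform recurrence rules out the obstructing configurations there and guarantees that a uniformly recurrent word on $\NN$ with the correct age can be chosen, matching the statement of the theorem.
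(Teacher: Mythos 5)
Your overall architecture coincides with the paper's (admittedly sketchy) proof: sufficiency from Theorem \ref{thm:uniformly-ages} together with a finite case-by-case verification for the eleven ages in $\mathcal{L}$ (which the paper delegates to Chapter 6 of \cite{oudrar}), and necessity from Theorem \ref{thm:chudnovsky}, with the five ``structured'' families absorbed into the ages of the limit graphs of Figures \ref{fig:list-graph-min-a}--\ref{fig:list-graph-min-c} and the chain case producing some $\age(G_\mu)$. Your equality mechanism is also the right one: a proper hereditary subclass of a minimal prime class contains only finitely many primes, while each $\age(G_i)$, $G_i \in \mathcal{L}$, and each $\age(G_\mu)$ with $\mu$ uniformly recurrent contains infinitely many.

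There is, however, one step in your chain case that fails as stated: from ``$\age(G_\mu)=\mathcal{C}$ is minimal prime'' you invoke Theorem \ref{thm:uniformly-ages} to conclude that $\mu$ is recurrent, ``hence uniformly recurrent''. The equivalence in that theorem is between $(i)$ ``$\mu$ is uniformly recurrent'' and $(ii)$ ``$\mu$ is recurrent \emph{and} $\age(G_\mu)$ is minimal prime''; recurrence is a hypothesis in $(ii)$, not a consequence of minimal primality. The paper's own example shows your inference is false: the word $\mu'$ on $\NN$ with $\mu'_1=0$ and $\mu'_i=1$ otherwise is not recurrent, yet $G_{\mu'}$ is the one-way infinite path, so $\age(G_{\mu'})$ equals the minimal prime age of the graph of the constant word $111\cdots$. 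Hence a K\"onig-type limit word $\mu$ with $\age(G_\mu)=\mathcal{C}$ need not be recurrent, and your argument stalls exactly there. The repair is standard but must be made explicit: either build recurrence into the extraction, or, given any infinite word $\mu$ with $\age(G_\mu)\subseteq\mathcal{C}$, pass to a uniformly recurrent word $\mu''$ with $\fac(\mu'')\subseteq\fac(\mu)$ (existence of a minimal subshift inside the orbit closure of $\mu$), observe that $\age(G_{\mu''})\subseteq\age(G_\mu)\subseteq\mathcal{C}$, and then use the direction $(i)\Rightarrow(ii)$ of Theorem \ref{thm:uniformly-ages} together with minimality of $\mathcal{C}$ to conclude $\mathcal{C}=\age(G_{\mu''})$. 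With that substitution your proof goes through and matches the paper's intended route, including your correct handling of the $\NN$ versus $\ZZ$ domain issue.
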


\begin{proof}
$\Leftarrow$. Follows from Theorem \ref {thm:uniformly-ages} and Chapter 6 page 109 of the first author's thesis \cite{oudrar}.

 $\Rightarrow$ Follows essentially from Theorem \ref{thm:chudnovsky}.  Let $\mathcal C$ be a minimal prime age. Then $\mathcal C$ contains infinitely many prime graphs  of one of the types  given in Theorem \ref{thm:chudnovsky}.   If for an example, $\mathcal C$ contains infinitely many chains,  that is graphs of the form $G_{\mu}$ for $\mu$  finite, then, since it is minimal prime, this is the age of some $G_{\mu}$ with $\mu$ uniformly recurrent. For the other cases, use the structure of the infinite graphs described in  Figures \ref{fig:list-graph-min-a}, \ref {fig:list-graph-min-b} and \ref{fig:list-graph-min-c}.
\end{proof}

\begin{theorem}
\begin{enumerate}[$(1)$]
  \item A minimal prime hereditary class $\mathcal{C}$ of finite graphs is hereditary  well-quasi-ordered if and only if $\mathcal{C}\in \mathcal L$.
  \item A minimal prime hereditary class $\mathcal{C}$ of finite graphs remains well-quasi-ordered when just one  label is added if and only if $\mathcal{C}=\age(G_\mu)$ for some periodic $0$-$1$ word on $\NN$, or $\mathcal{C}\in \mathcal L$.
  \end{enumerate}
\end{theorem}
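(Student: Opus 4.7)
My plan is to leverage the characterization in Theorem~\ref{thm:charact-minimal-prime-ages}, so that each of the two parts reduces to two sub-claims: a wqo verification for the enumerated ages, and a non-wqo construction for everything else. The skeleton is the same for both parts; only the allowed labelling and the cut-off between wqo and non-wqo change.

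For the positive direction of part $(1)$, I would treat the eleven ages in $\mathcal{L}$ one at a time. Each of the representative graphs $G_0, \overline{G}_0, G_1, \overline{G}_1, \ldots, G_6, \overline{G}_6$ displayed in Figures~\ref{fig:list-graph-min-a}--\ref{fig:list-graph-min-c} is essentially one-dimensional (a path, a half-graph, or a small modification), and its finite induced subgraphs can be coded by a pair consisting of a word over a finite alphabet (describing local adjacency) and a gap sequence in $\NN$ (describing distances). Labelling the vertices with elements of an arbitrary wqo $Q$ enlarges the word alphabet to a product of the form $Q\times F$ with $F$ finite, which remains wqo. Labelled induced subgraph embedding refines to a Higman-style subword embedding on these codes, and Higman's lemma delivers the wqo property. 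The same coding, adapted to the periodic case, handles the positive direction of part $(2)$ for periodic $\mu$: periodicity reduces the adjacency information to a finite invariant (position modulo the period), so one-labelled finite induced subgraphs of $G_\mu$ become words over a finite wqo augmented with gaps in $\NN$, to which Higman again applies.

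For the negative directions, Theorem~\ref{thm:charact-minimal-prime-ages} reduces the problem to showing that if $\mathcal{C}=\age(G_\mu)$ with $\mu$ uniformly recurrent and $\mathcal{C}\notin\mathcal{L}$, then $\mathcal{C}$ fails to be hereditarily wqo, and moreover the single-label wqo fails when $\mu$ is not (eventually) periodic. For the second claim I would invoke the Morse--Hedlund theorem: if $\mu$ is aperiodic, then its factor complexity satisfies $p_\mu(n)\geq n+1$ for all $n$, providing an infinite family of pairwise distinct factors $f_n$ of lengths tending to infinity. Converting $f_n$ into the induced subgraph of $G_\mu$ on the corresponding interval and marking a single vertex whose local neighbourhood is forced by $f_n$ (such a vertex is produced by uniform recurrence applied to a long enough return word) yields an infinite antichain of $\{0,1\}$-labelled graphs under induced subgraph embedding. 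For the first claim in the periodic case that is not in $\mathcal{L}$, one already obtains a non-hereditary-wqo obstruction by using labels from a two-element antichain to encode choices of sub-intervals within one period.

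The main obstacle will be the construction of the antichain in the aperiodic case. Distinct factors $f_n$ of $\mu$ are not in general incomparable as induced subgraphs of $G_\mu$, since shorter factors embed into longer ones; the single label must therefore be placed in a position that is intrinsically determined by $f_n$ and cannot be produced by any embedding from $f_m$ with $m\neq n$. Making this precise requires exploiting the interplay between uniform recurrence (every factor reoccurs with bounded gap) and aperiodicity (factor complexity is unbounded) to pin down a canonical marker for each factor, and then verifying incomparability against the full embedding structure, not just subword containment. The rest of the argument (the wqo verifications via Higman) is routine once the right coding is fixed.
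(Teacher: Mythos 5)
Your reduction to Theorem \ref{thm:charact-minimal-prime-ages} is the right first step (the paper, being an extended abstract, prints no proof of this theorem, so the comparison is against its surrounding machinery), but both halves of your argument have genuine gaps. The decisive one is in the positive directions: your coding-plus-Higman claim proves too much. If labelled induced-subgraph embedding really ``refined to a Higman-style subword embedding'' on codes consisting of local adjacency letters and gap data, then the identical argument with labels drawn from an \emph{arbitrary} wqo would show that $\age(G_\mu)$ is hereditarily wqo for every periodic $\mu$ --- contradicting part $(1)$, which asserts that periodic nonconstant ages outside $\mathcal L$ are \emph{not} hereditarily wqo (indeed part $(2)$ exists precisely because these classes survive one added label but not two incomparable ones). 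The failure point is concrete: in the graph $G_\mu$ a letter of your code must record whether a chosen vertex is the immediate successor of the previously chosen one, and when a subword omits the predecessor of a letter carrying the consecutiveness bit, the induced subgraph on the selected positions is no longer the graph coded by the subword; conversely, an abstract embedding between induced subgraphs of $G_\mu$ need not respect the linear ordering of positions at all. The entire subtlety of the theorem --- why one label preserves wqo for periodic $\mu$ while a two-element antichain of labels destroys it --- lives exactly in this mismatch, and your proposal declares it ``routine.'' Relatedly, your treatment of the periodic-not-in-$\mathcal L$ case for part $(1)$ (``labels from a two-element antichain to encode choices of sub-intervals within one period'') is a gesture, not a construction; an explicit antichain in the style of the known labelled-wqo counterexamples is required.

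For the negative direction of part $(2)$ you concede yourself that the Morse--Hedlund marker construction is unfinished, and this is the second genuine gap; moreover the paper's own results make it unnecessary. By Theorem \ref{thm:bound-uniform}, if $\mu$ is uniformly recurrent and non-periodic then $\age(G_\mu)$ has infinitely many bounds. Distinct bounds are pairwise incomparable by minimality, and from each bound $B$ one deletes a vertex $v$ and uses the single added label to mark the neighbourhood of $v$ inside $B\setminus\{v\}\in\age(G_\mu)$: a label-preserving embedding between two such marked structures would extend, by sending deleted vertex to deleted vertex, to an embedding between the corresponding bounds, which is impossible. Hence infinitely many bounds already defeat wqo with one added label --- this is the one-label refinement of the cited fact that hereditarily wqo classes have finitely many bounds \cite{pouzet72} --- and no interplay between uniform recurrence, return words and factor complexity needs to be ``pinned down.'' As written, your proposal leaves unproved exactly the two steps where the theorem's content is concentrated: the one-label wqo for periodic words, and the one-label antichain for aperiodic ones.
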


\begin{corollary}
\begin{enumerate}[$(1)$]
  \item A hereditary class $\mathcal{C}$ of finite comparability graphs is minimal prime if and only if $\mathcal{C}=\age(G_\mu)$ for some uniformly recurrent word on $\NN$, or \\
   $\mathcal{C}\in  \{\age(G_0), \age (G_1), \age(\overline G_1), \age(G_3), \age( G_5), \age(G_6), \age(\overline G_6)\}$.

  \item A hereditary class $\mathcal{C}$ of finite permutation graphs is minimal prime if and only if $\mathcal{C}=\age(G_\mu)$ for some uniformly recurrent word on $\NN$, or \\
   $\mathcal{C}\in  \{ \age (G_1), \age(\overline G_1), \age(G_5), \age(G_6), \age(\overline G_6)\}$.
\end{enumerate}
\end{corollary}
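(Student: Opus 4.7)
The plan is to derive the corollary as a direct specialization of Theorem \ref{thm:charact-minimal-prime-ages}, by intersecting the complete list of minimal prime ages of graphs with the hereditary property of being (the age of) comparability graphs, respectively permutation graphs. Since both properties are inherited by induced subgraphs and determined by finite subgraphs (by the classical finite-character theorems for comparability and permutation graphs), an age $\age(G)$ is contained in the class of comparability graphs (respectively permutation graphs) if and only if $G$ itself is a comparability graph (respectively a permutation graph). So the entire question reduces to deciding, for each candidate age in the classification, whether the representative graph admits a transitive orientation (and in the permutation case, whether its complement does too).

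For the uniformly recurrent part of both statements, I would simply invoke Theorem \ref{thm:permutation-graph}: for any $0$--$1$ word $\mu$ the age $\age(G_\mu)$ already consists entirely of permutation graphs, hence \emph{a fortiori} of comparability graphs. So every minimal prime age of the form $\age(G_\mu)$ with $\mu$ uniformly recurrent appears in both lists, and conversely such ages are certainly minimal prime by Theorem \ref{thm:uniformly-ages}. This half is painless and the same argument works for (1) and (2) simultaneously.

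The substantial step is the case-by-case analysis of the eleven ages in $\mathcal L=\{\age(G_0),\age(\overline G_0),\age(G_1),\age(\overline G_1),\age(G_3),\age(\overline G_3),\age(G_4),\age(\overline G_4),\age(G_5),\age(G_6),\age(\overline G_6)\}$. For each graph in Figures \ref{fig:list-graph-min-a}--\ref{fig:list-graph-min-c} I would exhibit an explicit transitive orientation when one exists (using the natural order on the integer labelling of the vertices, and in the more complicated examples, constructing a realizer by the same extension mechanism used in Lemma \ref{lem:one-extension}), and conversely produce a Gallai obstruction (an odd asteroidal cycle / odd length cycle in the implication classes of the edges) when no such orientation exists. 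I expect this to confirm that precisely $G_0, G_1, \overline G_1, G_3, G_5, G_6, \overline G_6$ are comparability graphs among the eleven, so that the list for (1) has exactly those seven entries. For (2) I would then further discard those ages whose complement is not a comparability graph: $\overline G_0$ contains an induced odd cycle of length $\geq 5$ (as the complement of a subdivided star), and similarly $\overline G_3$ carries such an obstruction, while $G_1,\overline G_1,G_5,G_6,\overline G_6$ all remain permutation graphs (consistent with Theorem \ref{thm:permutation-graph} applied to the finite realizers built in the proof of minimality). This yields the five-element list in (2).

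The main obstacle is purely bookkeeping: identifying, for each of the eleven pictured graphs, the correct transitive orientation or the correct Gallai obstruction in its complement. Once this table is established, the corollary follows by combining Theorem \ref{thm:charact-minimal-prime-ages} with the observation above that being a comparability graph (respectively permutation graph) is a hereditary property of finite character, so the restriction of the full classification to each sub-universe is obtained simply by deleting from $\mathcal L$ the entries whose representative fails the corresponding test. Part (1) can also be derived, as an internal consistency check, from Theorem \ref{thm:minimalprime-graph-poset}(1), which guarantees that the minimal prime comparability graph ages are exactly the comparability graph ages of minimal prime poset ages, matching the seven ages listed.
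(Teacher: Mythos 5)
Your proposal is correct and follows essentially the same route the paper intends: the corollary is stated there without a separate proof, being exactly the specialization of Theorem~\ref{thm:charact-minimal-prime-ages} announced in the text (``a careful examination of our list of graphs to decide which graphs are comparability graphs''), with Theorem~\ref{thm:permutation-graph} handling the $\age(G_\mu)$ part and a finite case check on $\mathcal L$ handling the rest. Your resulting seven-element and five-element lists, and the cross-check via Theorem~\ref{thm:minimalprime-graph-poset}, agree with the paper.
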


\section{Bounds of minimal prime hereditary classes}

We recall that a \emph{bound} of a hereditary class $\mathcal C$ of finite structures (e.g. graphs, ordered sets) is any structure $\mathcal R\not \in \mathcal C$ such that every proper induced substructure of $\mathcal R$ belongs to $\mathcal C$.

\begin{theorem}\label{thm:bound-uniform}Let $\mu$ be a uniformly recurrent  and non-periodic $0$-$1$ word. Then $\age(G_\mu)$ has infinitely many bounds.
\end{theorem}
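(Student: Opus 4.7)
The plan is to translate the combinatorial richness of the uniformly recurrent non-periodic word $\mu$ into infinitely many graph-theoretic obstructions for $\age(G_\mu)$. The starting point is a classical fact from symbolic dynamics: the orbit closure $X_\mu$ of $\mu$ under the shift map is a \emph{minimal} subshift whose language is $Fac(\mu)$, and any minimal subshift of finite type reduces to a single periodic orbit. So if $Fac(\mu)$ had only finitely many minimal forbidden factors (MFFs), $X_\mu$ would be a subshift of finite type, forcing $\mu$ to be periodic---contrary to hypothesis. Hence $Fac(\mu)$ admits infinitely many MFFs, of unbounded length.

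The second step is to convert each sufficiently long MFF $w$ into a graph-theoretic obstruction. The crucial point is the rigidity of the encoding $v\mapsto G_v$: directly from the definition, the value $v_k$ can be read off $G_v$ as soon as the linear order on its vertex set $\{-1,0,\dots,|v|-1\}$ is known, since $v_k=1$ iff vertex $k$ has $k-1$ as its unique neighbor among strictly earlier vertices (and $v_k=0$ iff $k$ is adjacent to every earlier vertex except $k-1$). Using this rigidity together with Theorem \ref{thm:recurrent-word}---which tells us that for recurrent $\mu$ the graph-age $\age(G_\mu)$ retains $Fac(\mu)$---I would argue that any embedding of $G_w$ into $G_\mu$ must map $V(G_w)$ onto a consecutive interval of $V(G_\mu)$, up to the few natural symmetries of the encoding (reversal, complementation). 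Thus, provided $w$, $\overline{w}$, and their reversals all fail to be factors of $\mu$---a condition automatically satisfied for any sufficiently long MFF---one obtains $G_w\not\in\age(G_\mu)$, and therefore $G_w$ contains some bound of $\age(G_\mu)$ as an induced subgraph.

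Finally, the sizes of the bounds obtained this way cannot be uniformly bounded. Indeed, if every bound of $\age(G_\mu)$ had at most $N$ vertices, then membership in $\age(G_\mu)$ would be determined by forbidding a finite list of induced subgraphs of size $\leq N$; testing this condition on interval subsets of the graphs $G_v$ for finite words $v$ amounts to forbidding a finite list of factor patterns, which would give $Fac(\mu)$ only finitely many MFFs, contradicting the first step. The main obstacle is the rigidity argument of the second step: one must carefully exclude non-standard graph isomorphisms between $G_w$ and induced subgraphs of $G_\mu$ that do not respect the natural linear order of vertices. This is handled by combining the pointwise recoverability of $v_k$ from the local structure of $G_v$ above with the factor-information retention granted by Theorem \ref{thm:recurrent-word}.
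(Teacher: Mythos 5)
Your step (1) is sound: for a uniformly recurrent $\mu$, the orbit closure is minimal, a minimal subshift of finite type is a periodic orbit, and hence non-periodicity forces infinitely many minimal forbidden factors of unbounded length. (The paper, being an extended abstract, prints no proof of Theorem \ref{thm:bound-uniform}, so I assess your argument on its own merits.) The proof breaks at step (2), the ``rigidity'' claim that any embedding of $G_w$ into $G_\mu$ carries the vertex set of $G_w$ onto an interval of $V(G_\mu)$ up to reversal and complementation. This is precisely the heart of the theorem, and nothing you cite delivers it. Theorem \ref{thm:recurrent-word} does not apply: it requires the \emph{embedded} word to be recurrent, and a finite minimal forbidden factor $w$ is not recurrent; it also compares ages of two word-graphs, whereas you need that a single finite graph embedding forces a factor occurrence. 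Your ``pointwise recoverability'' of $v_k$ presupposes that the isomorphism respects the natural linear order on the vertices, which is exactly what an abstract graph embedding need not do, and scattered images genuinely produce new graphs: if $S$ is a set of pairwise non-consecutive positions, then in the subgraph of $G_\mu$ induced on $S$ each vertex $j$ is joined to \emph{all} earlier vertices of $S$ when $\mu_j=0$ and to \emph{none} when $\mu_j=1$; for instance three non-consecutive positions carrying the letter $0$ induce a triangle, which is not $G_v$ for any word $v$ of length $2$. So $\age(G_\mu)$ contains copies of word-graphs whose ``word reading'' differs from any factor of $\mu$, and ruling out such scattered copies of $G_w$ for long $w\notin Fac(\mu)$ requires a substantial structural lemma you have not supplied.

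Step (3) has a second, independent gap of the same nature: you assert that if all bounds had at most $N$ vertices, then membership in $\age(G_\mu)$ would ``amount to forbidding a finite list of factor patterns.'' But a forbidden graph on at most $N$ vertices can embed in $G_v$ on positions spread across an arbitrarily long stretch of $v$ --- only the letters at the chosen positions and the consecutiveness pattern among them matter --- so excluding finitely many graphs corresponds to excluding patterns in a compressed-subword order, not to excluding finitely many \emph{factors}. The inference ``finitely many bounds $\Rightarrow$ finitely many minimal forbidden factors $\Rightarrow$ subshift of finite type $\Rightarrow$ periodic'' therefore does not go through as written. That the correspondence between bounds of $\age(G_\mu)$ and forbidden factors of $\mu$ is delicate is underscored by the paper itself: even the statement that a periodic nonconstant $\mu$ yields \emph{finitely} many bounds is left there as an open conjecture. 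To repair your argument you would need to prove a genuine rigidity theorem for long word-graphs (controlling all embeddings, including scattered ones, up to the $w\mapsto w^*$ and $w\mapsto\overline w$ symmetries), and then re-derive the unboundedness of bound sizes from it rather than from the factor-order translation.
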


If $\mu$ is periodic, $\age(G_\mu)$ may have infinitely many bounds. This is the case if $\mu$ is constant. For nonconstant $0$-$1$ word, we propose the following conjecture.\\

\noindent \textbf{Conjecture:} If $\mu$ is a periodic nonconstant $0$-$1$ word, then $G_{\mu}$ has finitely many bounds.\\

As for Theorem  \ref{thm:bound-uniform}, we have.

 \begin{theorem}\label{thm:bound-uniformages}Let $\mu$ be a uniformly recurrent  and non-periodic $0$-$1$ word; let $B_{\mu}$ be a bichain such that the comparability graph of the intersection order is $G_{\mu}$ and $P_{\mu}$  be a transitive orientation of $G_{\mu}$. Then $\age(B_\mu)$ and $\age(P_\mu)$ have infinitely many bounds.
\end{theorem}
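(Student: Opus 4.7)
The plan is to transfer the infinite family of bounds of $\age(G_\mu)$ furnished by Theorem~\ref{thm:bound-uniform} into bounds of $\age(P_\mu)$ and $\age(B_\mu)$, using the comparability-graph projections $\pi_P\colon \age(P_\mu)\to\age(G_\mu)$, $Q\mapsto \comp(Q)$, and $\pi_B\colon \age(B_\mu)\to\age(G_\mu)$, $C\mapsto \comp(o(C))$. These are well-defined and surjective, since taking comparability graphs commutes with restriction to a subset.

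Let $(F_n)_{n\in\NN}$ be an infinite sequence of pairwise non-isomorphic bounds of $\age(G_\mu)$. For each $n$ I would first choose a transitive orientation $Q_n$ of $F_n$ and a bichain realizer $C_n$ of $Q_n$, both of which exist via Lemma~\ref{lem:one-extension}: the bounds produced in the proof of Theorem~\ref{thm:bound-uniform} are expected to arise as graphs $G_{w_n}$ for minimally forbidden factors $w_n$ of $\mu$, hence as permutation graphs, so Lemma~\ref{lem:one-extension} applies directly. Projecting through $\pi_P$ and $\pi_B$ immediately yields $Q_n\notin\age(P_\mu)$ and $C_n\notin\age(B_\mu)$: otherwise $F_n=\comp(Q_n)$ (respectively $F_n=\comp(o(C_n))$) would lie in $\age(G_\mu)$, contradicting that $F_n$ is a bound. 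Since both classes are hereditary and the structures finite, I then extract bounds $Q_n^*\le Q_n$ of $\age(P_\mu)$ and $C_n^*\le C_n$ of $\age(B_\mu)$.

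The main obstacle, and the heart of the proof, is to establish that infinitely many of the $Q_n^*$ (and of the $C_n^*$) are pairwise non-isomorphic. My plan is to reduce this to the following \emph{closure principle}: every poset whose comparability graph belongs to $\age(G_\mu)$ already belongs to $\age(P_\mu)$, and the analogous statement for bichains and $\age(B_\mu)$. Granted closure, if $\comp(Q_n^*)\subsetneq F_n$, then by minimality of $F_n$ one has $\comp(Q_n^*)\in\age(G_\mu)$, hence $Q_n^*\in\age(P_\mu)$ by closure, contradicting that $Q_n^*$ is a bound. Therefore $\comp(Q_n^*)=F_n$ for every $n$, so the pairwise non-isomorphism of the $F_n$ is inherited by the $Q_n^*$. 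Proving the closure principle is where the hypothesis of uniform recurrence is essential: recurrence reproduces every finite window of $\mu$ (and, by a symmetric argument, every reversed window) arbitrarily often inside $\mu$, and combined with the inductive construction of Lemma~\ref{lem:one-extension} and Gallai's modular decomposition of transitive orientations, this should allow every transitive orientation of any $G_\mu|_A$ to be realized as $P_\mu|_{A'}$ for a suitable $A'$. The bichain case proceeds in parallel by swapping the two linear orders of $B_\mu$ to access the "dual" realizations needed for the closure argument.
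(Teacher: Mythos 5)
Your transfer scaffolding is reasonable, and your extraction argument is internally coherent: granted your two pillars, the identity $\comp(Q_n^{*})=F_n$ does follow and pairwise non-isomorphism of the $F_n$ passes to the $Q_n^{*}$. (For calibration: this extended abstract states Theorem~\ref{thm:bound-uniformages} without any proof, so there is no text in the paper from which your missing lemmas could be quoted.) The first pillar is already a genuine gap. To define $Q_n$ and $C_n$ you need the bounds $F_n$ of $\age(G_\mu)$ to be comparability graphs. Theorem~\ref{thm:bound-uniform} asserts only that infinitely many bounds exist; a bound lies \emph{outside} $\age(G_\mu)$ by definition, so Theorem~\ref{thm:permutation-graph} says nothing about it, and Lemma~\ref{lem:one-extension} applies only to graphs of the form $G_w$. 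Your justification --- that the bounds ``are expected to arise as $G_{w_n}$ for minimally forbidden factors $w_n$'' --- is speculation about a proof you have not seen, and it is delicate even on its own terms: as the paper's example of $\mu=111\cdots$ versus $\mu'=1011\cdots$ shows, $w\notin\fac(\mu)$ does not imply $G_w\notin\age(G_\mu)$ (Theorem~\ref{thm:recurrent-word} concerns infinite recurrent words, not finite windows), so the passage from forbidden factors to graph bounds is itself nontrivial. If the bounds really do sit inside graphs $G_{w}$ they are permutation graphs, since that class is hereditary; but that hypothesis is exactly what you have not established.

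The second pillar, your closure principle, is the heart of the matter and, as sketched, it fails at a specific point. Since $\age(G_\mu)$ is minimal prime, $G_\mu$ is prime, so each prime $H\in\age(G_\mu)$ admits exactly two transitive orientations, dual to one another and in general non-isomorphic (the two fences orienting a path $P_{2k+1}$ already have different numbers of maximal elements); your closure principle forces both to embed in $P_\mu$, i.e.\ it forces $\age(P_\mu)$ to be closed under order-duality. Nothing in your mechanism produces duals: recurrence yields translated copies of a window, and the reversal symmetry $G^{\nu^{*}}\cong G_\nu$ of the paper's remark is a \emph{graph} isomorphism, which transports orientations covariantly and can never convert an orientation into its dual; moreover the factor set of a uniformly recurrent word need not be closed under reversal, so the ``reversed windows'' you invoke may simply be absent from $\mu$. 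The bichain side is worse: swapping the two linear orders of $B_\mu$ leaves $o(B_\mu)=\le_1\cap\le_2$ unchanged (it replaces a pattern by its inverse), so it cannot supply dual realizations, which would require reversing \emph{both} orders; and since a decomposable two-dimensional poset can have several non-isomorphic bichain realizations (the patterns $231$ and $312$ both realize a singleton plus a two-element chain), closure for bichains demands that all of them embed in the single structure $B_\mu$ --- again unproven, and not a consequence of anything cited. In short, the reduction is fine but both load-bearing claims are missing, and the dual-realization step as you argue it rests on a symmetry that does not exist; a correct proof must either establish these closure properties for the specific structures $P_\mu$, $B_\mu$, or bypass them by constructing bounds of $\age(P_\mu)$ and $\age(B_\mu)$ directly from the combinatorics of $\mu$.
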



\begin{thebibliography}{1}

\bibitem{albert-atkinson}
M.H.~ Albert and M.D. Atkinson,
\newblock {\em Simple permutations and pattern restricted permutations.}
\newblock {\em Discrete Mathematics}, {\bf 300} (2005) 1--15.

\bibitem{A-A-K}
M.H.~ Albert, M.D. Atkinson and M. Klazar,
\newblock {\em The enumeration of simple permutations.}
\newblock  Journal of integer sequences, Vol. 6 (2003), Article 03.4.4.

\bibitem{A-A-V}
M.H.~ Albert, M.D. Atkinson and V. Vatter,
\newblock {\em Subclasses of the separable permutations.}
\newblock  London Mathematical Society, Volume 43, Issue 5 (2011), Pages 859--870.


\bibitem{All-Sha}
Jean-Paul Allouche and Jeffrey Shallit, \emph{Automatic Sequences: Theory, Applications, Generalizations.} Cambridge University Press. (2003). ISBN 978-0-521-82332-6.

\bibitem{assous-pouzet}R. Assous, M. Pouzet, \emph{J\'onsson posets,} Algebra Universalis, 79 (2018) no. 3, Art. 74, 26 pp.

\bibitem{Bar-Hillel-Perles-Shamir}Yehoshua Bar-Hillel, Micha A. Perles and Eli Shamir, \emph{On formal properties of simple phrase structure grammars}, Zeitschrift f\"{u}r Phonetik, Sprachwissenschaft und Kommunikationsforschung,  \textbf{14} (1961), 143--172.

\bibitem{beauquier-nivat} D. Beauquier and M. Nivat, \emph{About rational sets of factors of a bi-infinite word.} Automata, languages and programming (Nafplion, 1985), 33--42, Lecture Notes in Comput. Sci., 194, Springer, Berlin, 1985.

\bibitem{berthe}
Berth\'e, Val\'erie; Rigo, Michel, eds. (2010). \emph{Combinatorics, automata, and number theory.} Encyclopedia of Mathematics and its Applications. 135. Cambridge: Cambridge University Press. ISBN 978-0-521-51597-9.

\bibitem{brignall-engen-vatter}
R.~Brignall, M.~Engen and V.~Vatter,  \emph{A Counterexample Regarding Labelled Well-Quasi-Ordering}, Graphs and
Combinatorics, \textbf{34} (2018), 1395--1409.

\bibitem{Br}
R.~ Brignall,
\newblock {A survey of simple permutations. Permutation patterns.}
\newblock 41--65, London Math. Soc. Lecture Note Ser., 376, Cambridge Univ. Press, Cambridge, 2010.

\bibitem{cameron} P.J.~Cameron, \emph{Homogeneous permutations,  Permutation patterns} (Otago, 2003). Electron. J. Combin. {\bf 9} (2002/03), no. 2, Research paper 2, 9 pp.

\bibitem{chudnovsky}
M.~Chudnovsky and R.~Kim and S.~Oum and P.~Seymour, \emph{Unavoidable induced subgraphs in large graphs with no homogeneous sets}, Journal of Combinatorial Theory, Series B \textbf{118} (2016), 1--12.

\bibitem{dushnik-miller}
B.~Dushnik and E.W.~Miller, \emph{Partially ordered sets}, Amer. J.
Math. \textbf{63} (1941), 600--610.

\bibitem{delhomme1} C.~Delhomm\'e, \emph{Nicely BQO grounded categories and $2$-structures}, preprint, 2014.

\bibitem{ehren}
A.~ Ehrenfeucht, T.~Harju, G.~Rozenberg,
 \newblock{\em The theory of 2-structures. A framework for decomposition and transformation of graphs.}
\newblock{\em World Scientific Publishing Co., Inc., River Edge, NJ, 1999.}


\bibitem{fraisse3}
R.~Fra\"{\i}ss\'{e},
\newblock {\em On a decomposition of relations which generalizes the sum of ordering relations.}
\newblock Bull. Amer. Math. Soc., 59:389, 1953.




\bibitem{fraissetr}
R.~Fra{\"\i}ss{\'e}, Theory of relations. Revised edition. With an appendix by Norbert Sauer. Studies in Logic and the Foundations of Mathematics, 145. North-Holland Publishing Co., Amsterdam, 2000. ii+451.

\bibitem{fraisse84}
R.~Fra\"{\i}ss\'e, \emph{L'intervalle en th\'eorie des relations, ses g\'en\'eralisations, filtre intervallaires et cl\^oture d'une relation.},  Annals of Discrete Math \textbf{23} (1984), 313--341, In "Orders, description  and roles". Pouzet. M and Richard. D.,\'ed. (L'Arbresle, 1982), 313–341, North-Holland Math. Stud., 99, North-Holland, Amsterdam, 1984.




\bibitem{gallai}
T.~Gallai, \emph{Transitiv orientierbare Graphen}, Acta Math. Acad. Sci. Hungar. \textbf{18} (1967), 25--66 (English translation by F. Maffray and M. Preissmann in J.J. Ramirez-Alfonsin and B. Reed (Eds), Perfect graphs, Wiley 2001, pp. 25--66).

\bibitem{higman} G.~Higman, \emph{Ordering by divisibility in abstract algebras}. Proc. London Math. Soc. \textbf{3}
(1952), 326--336.

\bibitem{ille} P.~Ille, Indecomposable graphs, Discrete Math, 173 (1997) 71--78.

\bibitem{kearnes}K.~Kearnes, G.~Oman,  \emph{J\'onsson posets and unary J\'onsson algebras,}  Algebra Universalis {\bf 69} (2013), no. 2, 101--112.

\bibitem{kelly85} D.~Kelly, \emph{Comparability graphs}, in Graphs and Orders, I.Rival ed.,  NATO Adv. Sci. Inst. Ser. C
Math. Phys. Sci. \textbf{147} (1985), 3--40.

\bibitem{kelly77}
D.~Kelly, \emph{The $3$-irreducible partially ordered sets}, Can. J. Math., Vol. XXIX, No. \textbf{2} (1977), 367--383.

\bibitem{klazar}
M.~Klazar, \emph{Overview of general results in combinatorial enumeration}, in {\em Permutation patterns},
London Math. Soc. Lecture Note Ser., {\bf 376}, (2010), 3--40, Cambridge Univ. Press, Cambridge.

\bibitem{korpelainen-lozin-razgon}
N.~Korpelainen, V.~ Lozin and I.~Razgon,  \emph{Boundary Properties of Well-Quasi-Ordered Sets of Graphs}, Order
\textbf{30} (2013), 723--735.


\bibitem{korpelainen-lozin}
N.~Korpelainen and V.~Lozin,  \emph{Bipartite Induced Subgraphs and Well-Quasi-Ordering}, J Graph Theory \textbf{67} (2011),
235--249.

\bibitem{laver} R.~Laver, \emph{On Fra\"{\i}ss\'e's order type conjecture}, Ann. of Math. (2) {\bf 93}  (1971) 89--111.

\bibitem{lothaire}
M.~Lothaire. \emph{Finite and Infinite Words}. Algebraic Combinatorics on Words. Cambridge University Press. 2002.

\bibitem{lozin-gabor} V.~Lozin,  R.~G\'abor, \emph{Minimal universal bipartite graphs},
Ars Combin. 84 (2007), 345--356.

\bibitem{lozin-mayhill} V.~Lozin and C.~Mayhill, \emph{Canonical Antichains of Unit Interval and Bipartite Permutation Graphs}, Order \textbf{28} (2011), 513--522.

\bibitem{maffray}
F.~Maffray, M.~Preissmann, \emph{A translation of Gallai’s paper: 'Transitiv Orientierbare Graphen'}, In: Perfect
Graphs (J. L. Ramirez-Alfonsin and B. A. Reed, Eds.), Wiley, New York, 2001, pp. 25–-66.

\bibitem{marcus-tardos}
A.~ Marcus, G.~Tard\"os  \newblock {\em Excluded permutation matrices and the Stanley-Wilf conjecture}, {\em J. Combin. Theory}, {Ser. A 107} (2004), 153--160.
\bibitem{malliaris}M.~Malliaris, C.~Terry, On unavoidable-induced subgraphs in large prime graphs. J. Graph Theory 88 (2018), no. 2, 255–270
\bibitem{mckay1}G.A.~McKay, \emph{On better-quasi-ordering classes of partial orders}, 56pp. to appear   in part 1 of the special issue of the Journal of Multiple-Valued Logic and Soft Computing dedicated to Ivo Rosenberg and edited by Miguel Couceiro, and Lucien Haddad.

\bibitem{oudrar} D.~Oudrar, Sur l'\'enum\'eration de structures discr\`etes: une approche par la th\'eorie des relations.  Th\`ese de doctorat, Universit\'e d'Alger  USTHB  \`a Bab Ezzouar,  28 sept. 2015, ArXiv:1604.05839.


\bibitem{oudrar-pouzet}D.~Oudrar, M.~Pouzet, \newblock {\em Profile and hereditary classes of relational structures}, Proceedings ISOR'11, International Symposium on Operational Research, Algiers , Algeria , May 30-June 2, 2011, H.Ait Haddadene, I.Bouchemakh, M.Boudar, S.Bouroubi (Eds) LAID3.

\bibitem{oudrar-pouzet2016} D. Oudrar, M. Pouzet, \emph{Profile and hereditary classes of relational structures}, J. of MVLSC Volume \textbf{27}, Number 5-6 (2016), 475--500.

\bibitem{petkovsek}
M.~Petkov\u{s}ek,  \emph{Letter graphs and well-quasi-order by induced subgraphs}, Discrete Mathematics \textbf{244} (2002),
375--388.

\bibitem{pouzet-sauer}M.~Pouzet, N.~Sauer,  \emph{From well-quasi-ordered sets to better-quasi-ordered sets},  Electron. J. Combin. {\bf 13}, (2006), no. 1, Research Paper 101, 27 pp. (electronic).

\bibitem{pouzet72}
M.~Pouzet, Un bel ordre d'abritement et ses rapports avec les bornes d'une multirelation,  C.R. Acad. Sci. Paris S\'erie A-B {\bf 274} (1972), A1677--A1680.
\bibitem{pouzettr}
M.~Pouzet,
\newblock {\em Sur la th\'eorie des relations,}
\newblock  Th\`ese d'\'Etat, Universit\'e Claude-Bernard, Lyon 1, 1978.

\bibitem{pouzet-minimale}M.~Pouzet, Relation minimale pour son \^age, {\em Z. Math. Logik Grundlag. Math.},  {\bf 25} (1979), 315--344.

\bibitem {pouzet-impartible} M.~Pouzet, Relations impartibles, {\em Dissertationnes, } {\bf 103} (1981), 1--48.

\bibitem{pouzet.81}
M.~Pouzet,
\newblock  Application de la notion de relation presque-encha\^\i
		  nable au d\'enombrement des restrictions finies d'une
		  relation, {\em Z. Math. Logik Grundlag. Math.}, {\bf  27} (1981), 289--332.


\bibitem{pouzet-sobranisa}M.~Pouzet, M.~Sobrani, Sandwiches of ages. Proceedings of the XIth Latin American Symposium on Mathematical Logic (M\'erida, 1998). Ann. Pure Appl. Logic 108 (2001), no. 1-3, 295--326.

\bibitem{pouzet-zaguia-dmtcs21} M.~Pouzet and I.~Zaguia, \emph{Graphs containing finite induced paths of unbounded length}, Discrete Mathematics and Theoretical Computer Science DMTCS vol. 23 :2, 2021, \#3.

\bibitem{pouzet-zaguia-w.q.o. 22}
M.~Pouzet and I.~Zaguia, \emph{Hereditary classes of ordered sets coverable by two chains}, arXiv, https://arxiv.org/abs/2112.02633.


\bibitem{pouzet-zaguia2009}
M.~Pouzet and I.~Zaguia, \emph{On Minimal Prime Graphs and Posets},  Order \textbf{16} (2009), 357--375.



\bibitem{pytheas}
Pytheas Fogg, N. (2002). Berthé, Valérie; Ferenczi, Sébastien; Mauduit, Christian; Siegel, Anne, eds. \emph{Substitutions in dynamics, arithmetics and combinatorics.} Lecture Notes in Mathematics. 1794. Berlin: Springer-Verlag. ISBN 3-540-44141-7.

\bibitem{S-T}
 J.H.~Schmerl, W.T.~Trotter,  \newblock {\em Critically indecomposable partially ordered sets, graphs, tournaments and other binary relational structures}, {\em Discrete Math.}, { 113 (1-3)} (1993) 191--205.

\bibitem{Sloane} N. J. A.~Sloane, The On-Line Encyclopedia of Integer Sequences, sequence A111111.

\bibitem{sobranithesis} M.~Sobrani, \emph{Structure d'ordre de la collection des \^ages de relations},  Th\`ese de doctorat, Universit\'e Claude-Bernard, Lyon, 18 d\'ec. 1992.

\bibitem{sobranietat} M.~Sobrani,
\emph{Sur les \^ages de relations et quelques aspects homologiques des
  constructions D+M},
\newblock Th\`ese de doctorat d'\'etat, Universit\'e S.M.Ben Abdallah-Fez, Fez,   January 2002.


\bibitem{szp}
E.~Szpilrajn, {\em Sur l'extension de l'ordre partiel,} Fund. Math., {\bf 16} (1930), 386--389.

\bibitem{thomasse} S.~Thomass\'e, On better quasi ordering countable series-parallel orders, Trans. Amer. Math Soc. {\bf 352}  (no 6) (1999),  2491--2505.

\bibitem{trotter-moore}
W.T.~Trotter, and J.I.~Moore, Jr., \emph{Characterization problems for graphs, partially ordered sets, lattices, and families of sets}, Discrete Mathematics \textbf{16} (1976)361--381.


\bibitem{vatter} V.~Vatter. Permutation classes. In Handbook of Enumerative Combinatorics , pages 754–833. CRC Press, 2015.

\bibitem{zverovich} I.~Zverovich, \emph{Extension of hereditary classes with substitutions}, Discrete Appl.Math., 128(2-3):487--509, 2003.



\end{thebibliography}
\end{document}